\crefname{section}{Section}{Sections}
\crefname{appendix}{Appendix}{Appendices}
\crefname{subsection}{\S}{\S\S}
\theoremstyle{plain}
\newtheorem{lemma}{Lemma}[section]
\newtheorem{proposition}[lemma]{Proposition}
\newtheorem{corollary}[lemma]{Corollary}
\newtheorem{theorem}[lemma]{Theorem}
\theoremstyle{nonumberplain}
\theoremstyle{plain}
\newtheorem{definition}[lemma]{Definition}
\newtheorem{example}[lemma]{Example}
\newtheorem{remark}[lemma]{Remark}
\newtheorem{notation}[lemma]{Notation}
\crefname{definition}{definition}{definitions}
\crefname{ex}{example}{examples}
\crefname{remark}{remark}{remarks}
\crefname{convention}{convention}{conventions}
\crefname{notation}{notation}{notations}
\crefname{table}{table}{tables}
\crefname{lemma}{lemma}{lemmas}
\crefname{proposition}{proposition}{propositions}
\crefname{corollary}{corollary}{corollaries}
\crefname{theorem}{theorem}{theorems}
\crefname{enumi}{}{}
\crefname{assumption}{assumption}{Assumptions}
\crefname{equation}{}{}
\numberwithin{equation}{section}
\theoremstyle{nonumberplain}
\newtheorem{proof}{Proof}
\newcommand\pf[1]{\newtheorem{#1}{Proof of \Cref{#1}}}
\newcommand\bB{{\mathbb B}}
\newcommand\bC{{\mathbb C}}
\newcommand\bR{{\mathbb R}}
\newcommand\bS{{\mathbb S}}
\newcommand\bZ{{\mathbb Z}}
\newcommand\cA{{\mathcal A}}
\newcommand\cB{{\mathcal B}}
\newcommand\cD{{\mathcal D}}
\newcommand\cI{{\mathcal I}}
\newcommand\cJ{{\mathcal J}}
\newcommand\cK{{\mathcal K}}
\newcommand\cT{{\mathcal T}}
\newcommand{\cat}[1]{\textsc{#1}}
\newcommand{\qedhere}{\mbox{}\hfill\ensuremath{\blacksquare}}
\title{Rigidity results for automorphisms of Hardy-Toeplitz $C^*$-algebras}
\author{Alexandru Chirvasitu}
\begin{document}

\date{}

\newcommand{\Addresses}{{
  \bigskip
  \footnotesize

  \textsc{Department of Mathematics, University at Buffalo, Buffalo,
    NY 14260-2900, USA}\par\nopagebreak \textit{E-mail address}:
  \texttt{achirvas@buffalo.edu}

}}

\maketitle

\begin{abstract}
  We prove a number of results on the automorphisms of and isomorphisms between Hardy-Toeplitz algebras $\mathcal{T}(D)$ associated to bounded symmetric domains $D$: that the stable isomorphism class of $\mathcal{T}(D)$ determines $D$ (even when it is reducible), that for reducible domains $D=D_1\times\cdots \times D_s$ the automorphisms of the Shilov boundary $\check{S}(D)$ induced by those of $\mathcal{T}(D)$ permute the Shilov boundaries $\check{S}(D_i)$, and that by contrast to arbitrary solvable algebras, automorphisms of $\mathcal{T}(D)$ that are trivial on their character spaces $\check{S}(D)$ are trivial on the entire spectrum $\widehat{\mathcal{T}(D)}$.  
\end{abstract}

\noindent {\em Key words: bounded symmetric domain, Toeplitz $C^*$-algebra, tube type, Jordan triple system, tripotent, Shilov boundary, spectrum, solvable $C^*$-algebra}

\vspace{.5cm}

\noindent{MSC 2020: 47B35; 32M15}


\section*{Introduction}

Let $D$ be a bounded symmetric domain and $\check{S}$ its Bergman-Shilov boundary (see e.g. \cite{kw} or \Cref{subse:bdd} below for a recollection). We then have a Hardy Hilbert space $H^2:=H^2(\check{S})$ and a $C^*$-algebra $\cT(D)$ (the {\it Toeplitz algebra} of $D$) generated by the (Hardy-)Toeplitz operators $T_f$ on $H^2$ with continuous symbol $f\in C(\check{S})$ (\Cref{se:toeprel}).

The structure and representation theory of $\cT(D)$ has been studied extensively and is by now a very rich subject. For a necessarily woefully incomplete sampling of the literature the reader can consult for instance \cite{bc-wh,bck-wh,cob-sing,up0,up-alg} (concerned with elucidating the structure of $\cT(D)$) or \cite{up-bk} for a more comprehensive account, or \cite{up-surv} for a recent survey which in turn cites the ample literature.

In the present paper the obtain a number of ``rigidity'' results on $\cT(D)$ and their automorphisms. These come in several flavors, roughly within the same circle of ideas. One branch of the discussion focuses on recovering the domain $D$ from the $C^*$-algebra $\cT(D)$. The main result in this direction is that this is indeed possible:

\begin{theorem}[\Cref{th:stabiso,th:stabisogen}]
  Two bounded symmetric domains are isomorphic if and only if their Toeplitz algebras are stably isomorphic.
\end{theorem}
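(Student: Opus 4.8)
The plan is to treat the two implications separately, with the forward direction essentially formal and the converse carrying all the content. If $D_1\cong D_2$ as bounded symmetric domains, the biholomorphism induces a (measure-class preserving) real-analytic isomorphism of Shilov boundaries $\check{S}(D_1)\cong\check{S}(D_2)$, hence a unitary $H^2(\check{S}(D_1))\cong H^2(\check{S}(D_2))$ carrying Toeplitz operators to Toeplitz operators with transported symbol; this gives $\cT(D_1)\cong\cT(D_2)$ and a fortiori a stable isomorphism.

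For the converse I would argue through the spectrum. By the Brown--Green--Rieffel theorem a stable isomorphism of separable $C^*$-algebras is a Morita equivalence, and Morita equivalence induces a homeomorphism $\widehat{\cT(D_1)}\cong\widehat{\cT(D_2)}$ respecting the ideal lattice. The first step is therefore to recall, from the structure theory of $\cT(D)$ (cf.\ \cite{up0,up-alg,up-bk}), that $\widehat{\cT(D)}$ is stratified by tripotent rank into manifolds $M_0,M_1,\dots,M_r$, where $r$ is the rank of $D$: here $M_0$ is the single point carrying the defining representation on $H^2$, $M_r=\check{S}(D)$ is the closed stratum of characters, and the composition series realizing this stratification has continuous-trace subquotients with spectra the $M_k$. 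The crux of this step is to recover the stratification intrinsically from the topological space $\widehat{\cT(D)}$ --- for instance through the canonical hull--kernel filtration by iterated closed strata --- so that $r$, the manifolds $M_k$, and in particular the Shilov boundary $\check{S}(D)$ become stable-isomorphism invariants. This is what rescues the argument: the abelianization $\cT(D)\twoheadrightarrow C(\check{S}(D))$ is destroyed by stabilization, but $\check{S}(D)$ survives as the distinguished closed stratum of the spectrum.

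The second step converts this topological data into the numerical invariants that pin down $D$. Each $M_k$ is a manifold whose (covering) dimension is a homeomorphism invariant, and for a bounded symmetric domain these dimensions are explicit expressions in $r$ and the characteristic multiplicities $(a,b)$; since $r$ is already read off as the length of the filtration, inverting the dimension data recovers $(a,b)$, and the triple $(r,a,b)$ determines the irreducible domain through the Cartan classification, a short finite check handling the low-rank coincidences. For reducible $D=D_1\times\cdots\times D_s$ one has $\check{S}(D)=\prod_i\check{S}(D_i)$ and correspondingly $\cT(D)\cong\cT(D_1)\otimes\cdots\otimes\cT(D_s)$, so, all factors being type I, $\widehat{\cT(D)}\cong\prod_i\widehat{\cT(D_i)}$ as stratified spaces.

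The step I expect to be the main obstacle is precisely this reducible case: one must reconstruct the unordered multiset of factors from the bare product --- both of the Shilov boundaries and of the full stratified spectra. This is a uniqueness-of-factorization problem, the topological counterpart of the de Rham decomposition, which I would settle using the rigidity of these homogeneous spaces (e.g.\ Künneth factorization of their well-understood cohomology rings, together with the recovered rank stratification) to peel off the irreducible factors. Once the factorization is shown to be essentially unique, each factor is the stratified spectrum of an irreducible $\cT(D_i)$ and the irreducible case applies, yielding $D_1\cong D_2$ overall.
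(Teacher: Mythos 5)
The forward direction and your treatment of the irreducible case are essentially sound, and the latter takes a genuinely different route from the paper. Where you read the characteristic multiplicities $(a,b)$ directly off the covering dimensions of the tripotent strata (for fixed rank $r$, $\dim T_k$ is linear in $(a,b)$, and the values at $k=1$ and $k=r$ give an invertible system once $r\ge 2$, the rank-one case being the balls), the paper instead recovers $\dim D$ by an induction on rank, using the fibration of the boundary strata over the $T_k$ and the fact that an isomorphism of Toeplitz algebras matches up the subquotient images $\cT(D_e)$ attached to tripotents $e$ (\Cref{le:samedim}); it then identifies the domain from rank, dimension, and the homeomorphism type of $\check{S}(D)$, using the first Betti number of the Shilov boundary to separate tube from non-tube type (\Cref{cor:complinv}). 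Your route, if the dimension formulas and the finite check on $(r,a,b)$-coincidences are actually written out (the coincidences are harmless, as they occur only between isomorphic domains), avoids that induction entirely; the paper's route avoids the explicit multiplicity computations by table inspection. Both rest on the same first step, the intrinsic topological recovery of the rank stratification of $\widehat{\cT(D)}$ via specialization chains (\Cref{pr:char}, \Cref{le:islng}), which you correctly identify as what survives stabilization.

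The genuine gap is exactly where you predicted trouble: the reducible case. Unique factorization of a topological product into irreducible factors is false in general, and the K\"unneth argument you gesture at would require proving uniqueness of tensor-factorizations of the relevant cohomology rings (the Shilov boundaries here include spaces like $U(q)$ and $U(q)/O(q)$, not just spheres), together with a compatibility with the stratification --- none of which is carried out, and it is not clear it can be in this generality. The paper never factors the product space at all. Instead it exploits the ideal lattice of $\cT(D_1)\otimes\cdots\otimes\cT(D_s)$: the connected components of the first spectral layer $\widehat{\cI_1/\cI_0}$ are precisely the rank-one tripotent manifolds $T_1(D_j)$ of the individual factors (each connected because $D_j$ is irreducible), and an induction up the weight filtration (\Cref{le:prestall}, \Cref{cor:presfull}) shows that any isomorphism of stabilized Toeplitz algebras must match up the corner ideals $\bullet_j\cong \cT(D_j)\otimes\cK_j$, possibly after a permutation. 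This immediately gives $s_1=s_2$ (which you would otherwise also need to argue) and reduces everything to the irreducible case, with no unique-factorization input whatsoever. You should replace the cohomological peeling with this ideal-theoretic localization of the factors inside the algebra; as written, your final step is an open problem rather than a proof.
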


Secondly, we turn to automorphisms of $\cT(D)$ for possibly-reducible bounded symmetric domains
\begin{equation}\label{eq:ddis}
  D=D_1\times\cdots\times D_s.
\end{equation}
Such an automorphism will always induce a self-homeomorphism of the space
\begin{equation}\label{eq:shildec}
  \check{S}(D)=\check{S}(D_1)\times\cdots\times \check{S}(D_s)
\end{equation}
of characters of $\cT(D)$, and it is natural to ask which homeomorphisms arise in this fashion (or to put it differently, which homeomorphisms of $\check{S}(D)$ {\it lift} to $\cT(D)$). \cite{bdf,ys-sph}, for instance, answer the question for the open balls
\begin{equation*}
  D=\bB^{2n}\subset \bC^n
\end{equation*}
with respective Shilov boundaries $\check{S}(D)=\bS^{2n-1}$: the liftable homeomorphisms of the sphere are precisely those of degree 1. We do not quite answer the question here, as the problem is apparently still open for arbitrary {\it irreducible} bounded symmetric domains. The focus, rather, is on understanding how an automorphism of $\cT(D)$ plays with the tensor product decomposition
\begin{equation}\label{eq:tdtens}
  \cT(D)\cong \cT(D_1)\otimes\cdots\otimes \cT(D_s)
\end{equation}
corresponding to \Cref{eq:ddis}. Roughly speaking:
\begin{theorem}[\Cref{th:permtens,cor:1eq}]
  Let $\alpha$ be an automorphism of the Toeplitz algebra \Cref{eq:tdtens} corresponding to a bounded symmetric domain $D$ with a decomposition \Cref{eq:ddis} into irreducible factors.  
  
  The homeomorphism of the Shilov boundary $\check{S}(D)$ induced by $\alpha$ permutes the Cartesian factors $\check{S}(D_i)$ in the decomposition \Cref{eq:shildec}.
\end{theorem}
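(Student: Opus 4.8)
The plan is to work on the spectrum $\widehat{\cT(D)}$ and to show that the canonical stratification coming from the boundary structure of $D$ is preserved by any automorphism, then to read off the factor decomposition from the bottom layers of this stratification. An automorphism $\alpha$ of $\cT(D)$ induces a homeomorphism $\hat\alpha$ of $\widehat{\cT(D)}$ carrying the one-dimensional representations to themselves; since these are exactly the characters, $\hat\alpha$ restricts to the self-homeomorphism of $\check{S}(D)$ in \Cref{eq:shildec} that we must analyze. The goal is thus to recover, intrinsically from $\cT(D)$, the foliations of $\check{S}(D)$ by the fibres of the projections onto the individual factors $\check{S}(D_i)$, and to show that $\hat\alpha$ permutes them.

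First I would recall the canonical composition series of $\cT(D_i)$ for an irreducible factor $D_i$: its successive subquotients are continuous-trace algebras supported on the boundary strata of $D_i$ (indexed by tripotent rank), with top quotient the commutative algebra $C(\check{S}(D_i))$ obtained by killing the commutator ideal $\cC_i$. Because each term of this series is definable from the algebra alone---$\cC_i$ as the commutator ideal, the lower terms as the iterated commutator ideals of the subquotients, equivalently through the sizes of the irreducible representations---the filtration is automorphism-invariant. Via the identification \Cref{eq:tdtens} and $\widehat{\cT(D)}\cong\prod_i\widehat{\cT(D_i)}$ this produces a product stratification of $\widehat{\cT(D)}$ in which $\check{S}(D)=\prod_i\check{S}(D_i)$ is the minimal (closed) stratum. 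The strata adjacent to it are obtained by pushing a single coordinate one step off its Shilov boundary, so there are exactly $s$ of them, say $T_1,\dots,T_s$, with $T_i$ the product of the first boundary stratum of $\widehat{\cT(D_i)}$ with $\prod_{j\neq i}\check{S}(D_j)$.

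Since $\hat\alpha$ preserves the intrinsic stratification, it permutes the minimal adjacent strata: $\hat\alpha(T_i)=T_{\sigma(i)}$ for some permutation $\sigma$ of $\{1,\dots,s\}$, factors of the same isomorphism type being allowed to interchange. The decisive point is the way $T_i$ is attached to the character stratum. A point of $T_i$ has the form $(\xi_i,(x_j)_{j\neq i})$ with $\xi_i$ in the first boundary stratum of $\widehat{\cT(D_i)}$ and the remaining $x_j\in\check{S}(D_j)$ closed points; its closure in the product $\widehat{\cT(D)}$ is $\overline{\{\xi_i\}}\times\prod_{j\neq i}\{x_j\}$, so it meets $\check{S}(D)$ only in points whose coordinates away from $i$ are unchanged, the $i$-th coordinate ranging over $\overline{\{\xi_i\}}\cap\check{S}(D_i)$. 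Consequently the equivalence relation on $\check{S}(D)$ generated by ``lying in the closure of a common point of $T_i$'' is exactly the $i$-th factor foliation, namely the fibres of $\check{S}(D)\to\prod_{j\neq i}\check{S}(D_j)$; here I use that each $\check{S}(D_i)$ is connected and linked through the first boundary stratum of $D_i$, which holds because $D_i$ is irreducible. These foliations being intrinsic and $\hat\alpha$ carrying $T_i$ to $T_{\sigma(i)}$, the induced homeomorphism of $\check{S}(D)$ sends the $i$-th foliation to the $\sigma(i)$-th, which is precisely the assertion that it permutes the Cartesian factors.

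I expect the main obstacle to be the two structural inputs underpinning the argument rather than the combinatorial conclusion: namely, establishing that the composition series of $\cT(D_i)$---and hence the stratification of $\widehat{\cT(D)}$ together with its adjacency and attaching data---is genuinely intrinsic, and that under \Cref{eq:tdtens} it is the product of the factor stratifications with the ``closure moves a single coordinate'' property used above. Both rest on the detailed structure theory of Hardy-Toeplitz algebras, and the attaching and connectivity claim in particular requires enough control of the first boundary stratum of each irreducible $D_i$ to guarantee that the generated equivalence relation recovers the full factor $\check{S}(D_i)$ rather than a coarser partition.
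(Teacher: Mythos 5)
Your overall architecture---recover an intrinsic stratification of $\widehat{\cT(D)}$, show it is preserved, then read off the factor structure of $\check{S}(D)$ from attaching data---is viable and its first half matches the paper in spirit, but the two mechanisms you propose for intrinsicness both fail. Iterated commutator ideals stall after one step: the commutator ideal of $\cT(D)$ is indeed $\cI_{r-1}$ (the top quotient is the full abelianization $C(\check{S}(D))$), but the subquotients below the top are of the form $C(T_s)\otimes\cK(H_s)$, whose commutator ideal is the whole algebra since $\cK(H_s)$ is simple and noncommutative, so the iteration never produces the lower $\cI_k$. Nor do ``sizes of irreducible representations'' help: for every tripotent $e$ of non-maximal rank the representation $\sigma_e$ acts on the infinite-dimensional Hardy space $H^2(\check{S}(D_e))$, so dimension separates only the character layer from everything else, not the intermediate layers from one another. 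The correct intrinsic characterization---the one the paper uses in \Cref{pr:char} and \Cref{le:pressum}, resting on Upmeier's description of the closure relations---is purely topological: points of $\widehat{\cI_k/\cI_{k-1}}$ are exactly those terminating a maximal specialization chain of length $k$, and the weight-$(r-1)$ pieces you call $T_i$ are then the connected components of the penultimate layer. With that substitution your first half is sound.

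The second, more serious gap is the linking claim you yourself flag as decisive. Your foliation-recovery needs the equivalence relation on $\check{S}(D_i)$ generated by the plaques $\overline{\{\sigma_e\}}\cap\check{S}(D_i)=e+\check{S}(D_e)$ (for corank-one tripotents $e$, with $D_e$ of rank one) to be \emph{total}; if it only refines the factor foliation, knowing that $\hat\alpha$ maps classes to classes does not let you conclude it maps fibres to fibres, since the fibres are not canonically reconstructible from a finer intrinsic partition. Totality is plausible ($\check{S}(D_i)$ is connected and $K$-homogeneous, and one can check it by hand in classical cases, e.g.\ writing a unitary as a product of reflections for type $I_{q,q}$), but it is a genuine Jordan-theoretic assertion that you neither prove nor can cite. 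The paper's proof of \Cref{th:permtens} avoids this entirely and is worth comparing: after \Cref{cor:presfull} shows the ideals $\bullet_j\cong\cT(D_j)\otimes\cK_j$ are permuted, it invokes the stable-isomorphism rigidity \Cref{th:stabiso} to place the permutation in $\cat{sym}_{\cD}$ (so only isomorphic factors interchange---strictly more than your argument yields, though your weaker conclusion does suffice for the statement as quoted), normalizes $\alpha$ to fix each $\bullet_j$, and then gets the product form of $\alpha|_{\check{S}(D)}$ by a soft density argument: $\alpha$ restricts to $\alpha_1$ on $\check{S}(D_1)\times\{pt_1'\}$, and the open point $pt_1'$ is dense in $\widehat{\cT(D_1')}$, forcing $\alpha|_{\check{S}(D)}=\alpha_1\times\cdots\times\alpha_s$. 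If you want to salvage your route, you must either prove the linking statement for every irreducible domain or replace it with a density argument of this kind.
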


Finally, given an automorphism of $\cT(D)$, we look to the homeomorphism it induces on the spectrum $\widehat{\cT(D)}$. Here the main result is a kind of ``automatic triviality'' statement:
\begin{theorem}[\Cref{th:trivact}]
  If an automorphism of $\cT(D)$ acts trivially on the character space
  \begin{equation}\label{eq:inc}
    \check{S}(D)\subset \widehat{\cT(D)}
  \end{equation}
  then it acts trivially on the entire spectrum $\widehat{\cT(D)}$.
\end{theorem}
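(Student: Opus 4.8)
The plan is to work through the (by now classical) description of the spectrum $\widehat{\cT(D)}$ in terms of the tripotents of the underlying JB*-triple $Z$ of which $D$ is the circular bounded realization. Every irreducible representation of $\cT(D)$ is equivalent to one attached to a tripotent $e\in Z$: the representation $\pi_e$ acts on the Hardy space $H^2(\check{S}(D_e))$ of the complementary subdomain $D_e:=D\cap Z_0(e)$ cut out by the Peirce-$0$ space of $e$, by compressing symbols. Maximal (``unitary'') tripotents give exactly the one-dimensional representations, so $\check{S}(D)$ is precisely the set of maximal tripotents and $\pi_e$ is a character exactly when $Z_0(e)=0$. An automorphism $\alpha$ induces a self-homeomorphism $\widehat{\alpha}\colon \pi\mapsto\pi\circ\alpha^{-1}$ of $\widehat{\cT(D)}$ for the Jacobson topology; since it carries one-dimensional representations to one-dimensional representations it preserves $\check{S}(D)$ setwise, and by hypothesis fixes it pointwise. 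The whole task is thus to upgrade ``fixes the maximal tripotents'' to ``fixes every tripotent''.

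The geometric input I would isolate is the closure formula
\begin{equation*}
  \overline{\{\pi_e\}}\cap\check{S}(D)\;=\;e+\check{S}(D_e),
\end{equation*}
identifying the characters weakly contained in $\pi_e$ with the face of $\check{S}(D)$ consisting of the maximal tripotents dominating $e$. One inclusion is immediate: for $f_0\in\check{S}(D_e)$ the evaluation character $\chi_{e+f_0}\colon T_g\mapsto g(e+f_0)$ factors as $\mathrm{ev}_{f_0}\circ\pi_e$, since $\pi_e(T_g)=T^{D_e}_{g_e}$ with $g_e(w)=g(e+w)$ and $g_e(f_0)=g(e+f_0)$; hence $\ker\pi_e\subseteq\ker\chi_{e+f_0}$ and $\chi_{e+f_0}\in\overline{\{\pi_e\}}$. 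The reverse inclusion — that \emph{no other} characters occur — is where the structure theory of $\cT(D)$ is genuinely needed, and I expect it to be the main obstacle: it says a maximal tripotent weakly contained in $\pi_e$ must dominate $e$, which should follow from the symbol-compression form of $\pi_e$ (the symbol $g$ enters $\pi_e$ only through its restriction to the face $e+\check{S}(D_e)$, so $\ker\pi_e$ contains every $T_g$ vanishing there).

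Granting the closure formula, the conclusion is a short rigidity argument. Writing $\widehat{\alpha}(\pi_e)=\pi_{e'}$ and using that $\widehat{\alpha}$ is a homeomorphism fixing $\check{S}(D)$ pointwise, one gets
\begin{equation*}
  e'+\check{S}(D_{e'})\;=\;\overline{\{\pi_{e'}\}}\cap\check{S}(D)\;=\;\widehat{\alpha}\!\left(\overline{\{\pi_e\}}\cap\check{S}(D)\right)\;=\;\overline{\{\pi_e\}}\cap\check{S}(D)\;=\;e+\check{S}(D_e).
\end{equation*}
It then remains to recover $e$ from this face. Here I would invoke the circularity of Bergman–Shilov boundaries: $\check{S}(D_e)$ is invariant under multiplication by unimodular scalars, in particular under $w\mapsto -w$, so the face $e+\check{S}(D_e)$ is centrally symmetric about $e$. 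A nonempty bounded subset has at most one center of central symmetry (the composite of two such symmetries is a translation preserving a bounded set, hence trivial), so $e$ is determined by the face. The displayed equality therefore forces $e'=e$, whence $\widehat{\alpha}(\pi_e)=\pi_e$ for every tripotent $e$ and $\widehat{\alpha}$ is the identity on all of $\widehat{\cT(D)}$. Note that this last step is exactly where the special Jordan-theoretic geometry of $\cT(D)$ enters, in contrast to a general solvable algebra whose character faces need not pin down the ambient points.
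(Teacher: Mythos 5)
Your argument is correct, but it takes a genuinely different route from the paper's. The paper proceeds by \emph{downward induction} on the rank $k$ of the tripotent parametrizing a point of $\widehat{\cT(D)}$: granting triviality on all higher strata $T_{k'}$, $k'>k$, it invokes \cite[Corollary 3.11]{up-alg} to identify $\cK_e=\bigcap_{f>e}\ker\sigma_f$ with the preimage under $\sigma_e$ of the compact operators, an $\alpha$-invariant ideal in which $\ker\sigma_e$ sits as the unique maximal ideal; each kernel is thus recovered \emph{upward}, from the dominating higher-rank representations, and the induction is what reduces everything to the hypothesis on $T_r=\check{S}(D)$. You instead recover every point in one shot, with no induction, \emph{downward} from its specializations into the Shilov boundary: the closure formula $\overline{\{\pi_e\}}\cap\check{S}(D)=e+\check{S}(D_e)$ together with the elementary observation that a nonempty bounded set has at most one center of central symmetry (applicable because $\check{S}(D_e)$ is circled) pins down $e$ from data that the hypothesis fixes pointwise. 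Incidentally, the step you flag as the likely main obstacle---the reverse inclusion of the closure formula---is immediate from a fact the paper itself quotes: $\pi_e(\cT(D))\cong\cT(D_e)$ (\cite[p.~571]{up-alg}), so any character whose kernel contains $\ker\pi_e$ factors through $\cT(D_e)$ and is $\mathrm{ev}_{f_0}\circ\pi_e$ for some $f_0\in\check{S}(D_e)$; alternatively your separating-symbol argument (choose $g$ vanishing on the compact face $e+\check{S}(D_e)$ with $g(f)=1$, so $T_g\in\ker\pi_e\setminus\ker\chi_f$) works verbatim. As for what each approach buys: the paper's is purely ideal-theoretic, using only the specialization order on the spectrum and the compact-preimage characterization, while yours requires the explicit symbol-compression form of $\pi_e$ but yields a sharper conclusion---each point of $\widehat{\cT(D)}$ is \emph{determined} by the set of characters in its closure---which explains transparently why the statement holds for $\cT(D)$ yet fails for general solvable algebras as in \Cref{ex:ntriv}, exactly the contrast the paper draws. (One terminological nit: maximal tripotents are ``unitary'' only in tube type; since your argument uses only the condition $Z_0(e)=0$, nothing is affected.)
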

This presumably fits with the general rigidity theme, saying, as it does, that the homeomorphism of $\widehat{\cT}(D)$ induced by an automorphism of $\cT(D)$ is uniquely determined by its effect on just the compact Hausdorff layer \Cref{eq:inc} of the spectrum.

\Cref{se:prel} gathers background material on operator algebras, bounded symmetric domains, and so on, used in the subsequent discussion.

In \Cref{se:toeprel} we collect a number of general remarks on Toeplitz algebras and prove \Cref{th:stabiso}, which is the particular case of \Cref{th:stabisogen} applicable only to {\it irreducible} bounded symmetric domains.

\Cref{se:auto} is devoted to
\begin{itemize}
\item the proof of \Cref{th:permtens,cor:1eq}, on the fact that homeomorphisms of $\check{S}(D)$ liftable to automorphisms of $\cT(D)$ permute the Cartesian factors in \Cref{eq:shildec}, and separately
\item \Cref{th:stabisogen}, generalizing the domain-reconstruction \Cref{th:stabiso} to arbitrary domains.
\end{itemize}

In the short \Cref{se:autotriv} we prove that automorphisms of $\cT(D)$ induce homeomorphisms of $\widehat{\cT(D)}$ uniquely determined by their effect on just the Shilov boundary (\Cref{th:trivact}).

Finally, \Cref{se:a} recalls some tabular data on irreducible bounded symmetric domains, resulting from their classification and pertinent to the discussion.

\subsection*{Acknowledgements}

I am grateful for insightful comments and pointers to the literature by Jingbo Xia and Lewis Coburn.

This work is partially supported by NSF grant DMS-1801011.

\section{Preliminaries}\label{se:prel}

\subsection{Point-set topology}\label{subse:top}

The topological background is relatively slim, but we do need to recall some terminology (see for instance \cite[Chapter II, Exercise 3.17]{hart}).

\begin{definition}\label{def:spec}
  Let $x,y$ be two points in a topological space $X$. We say that
  \begin{itemize}
  \item $x$ {\it specializes to $y$}, or
  \item $y$ is a {\it specialization of $x$}, or
  \item $x$ is a {\it generization of $y$}
  \end{itemize}
  and write $x\to y$ if $y$ is contained in the closure of $\{x\}$.  
\end{definition}

Note that
\begin{equation*}
  x\ge y\iff x\to y
\end{equation*}
is a preorder on $X$ (i.e. a relation satisfying all of the properties of a partial order except perhaps for antisymmetry), and a partial order if the space $X$ is $T_0$ (meaning that for every two points there is an open set containing one but not the other \cite[\S 1.5]{eng}). Since we are interested in spectra of type-I $C^*$-algebras which are automatically $T_0$ (\cite[\S 3.1.3 and \S 3.1.6]{dix-cast}), we henceforth assume all of our topological spaces are $T_0$ unless specified otherwise.

\begin{definition}\label{def:chain}
  A {\it length-$k$ chain} in a topological space $X$ is a sequence of points 
  \begin{equation}\label{eq:chain}
    x_0\to x_1\to \cdots\to x_k
  \end{equation}
  with $x_i\ne x_{i+1}$ and the arrows indicating specialization as in \Cref{def:spec}.

  A {\it refinement} of \Cref{eq:chain} is a possibly-longer chain containing the points $x_i$ as some of its members. The chain is {\it maximal} if it does not admit a refinement.
\end{definition}

\subsection{$C^*$-algebras}\label{subse:cast}

We write $\cB(H)$ for bounded operators on a Hilbert space $H$, and $\cK(H)$ for the ideal of compact operators therein. Collectively, $\cK(H)$ are referred to as {\it elementary} $C^*$-algebras (e.g. \cite[\S 4.1.1]{dix-cast}). 

We take for granted much of the theory of $C^*$-algebra spectra (as treated, for instance, in \cite[Chapter 3]{dix-cast}), denoting the spectrum of $\cA$ by $\widehat{\cA}$. All $C^*$-algebras to which we apply the notion are type-I (or {\it postliminal} in the sense of \cite[Chapter 4]{dix-cast}, or {\it GCR} in the sense of \cite[Introduction]{kap}), so all reasonable versions of the spectrum agree (e.g. the set of primitive ideals and the set of isomorphism classes of irreducible representations) \cite[Theorem 4.3.7]{dix-cast}.

On occasion, we refer to {\it solvable} $C^*$-algebras, as introduced in \cite{dyn}. Briefly:
\begin{definition}\label{def:solv}
  A possibly-non-unital $C^*$-algebra $\cA$ is {\it solvable} if it admits a finite filtration
  \begin{equation*}
    \{0\}=\cI_{-1}\subset \cI_0\subset \cdots\subset \cI_r=\cA
  \end{equation*}
  by $C^*$ ideals such that
  \begin{itemize}
  \item each subquotient $\cI_j/\cI_{j-1}$ is isomorphic to $C_0(X_j)\otimes \cK(H_j)$ for some locally compact Hausdorff $X_j$ and Hilbert space $H_j$;
  \item the sequence $\dim H_j$ is non-increasing.
  \end{itemize}
  The minimal $r$ for which this is possible is the {\it length} of the solvable $C^*$-algebra $\cA$. 
\end{definition}

The following (presumably well-known) remark appears difficult to locate in the literature, so we state and prove it briefly here. Cf. \cite[Lemma 5]{ys-t2}, where the corresponding statement is proved for two specific ideals of the Toeplitz $C^*$-algebra $\cT(\bB^2\times \bB^2)$ (see \Cref{se:toeprel} for an explanation of the $\cT(\cdot)$ notation).

\begin{lemma}\label{le:sumclsd}
The algebraic sum of two $C^*$-ideals in a $C^*$-algebra is automatically closed, and hence again a $C^*$-ideal.   
\end{lemma}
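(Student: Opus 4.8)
The plan is to reduce the statement to the standard fact that a $*$-homomorphism between $C^*$-algebras has closed range. Write $\cI,\cJ\subseteq\cA$ for the two $C^*$-ideals, and let $\pi\colon\cA\to\cA/\cJ$ be the quotient map onto the quotient $C^*$-algebra. Since $\cI$ is itself a $C^*$-algebra, the restriction $\pi|_{\cI}\colon\cI\to\cA/\cJ$ is a $*$-homomorphism of $C^*$-algebras, and the whole argument hinges on the behaviour of its image.

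First I would record the elementary set-theoretic identity
\begin{equation*}
  \cI+\cJ=\pi^{-1}\big(\pi(\cI)\big).
\end{equation*}
Indeed, $\pi(a)\in\pi(\cI)$ says precisely that $a-i\in\cJ$ for some $i\in\cI$, i.e. $a\in\cI+\cJ$; conversely every element of $\cI+\cJ$ visibly maps into $\pi(\cI)$. Granting this, closedness of $\cI+\cJ$ is immediate once we know $\pi(\cI)$ is closed: $\pi$ is continuous, so the preimage of a closed set is closed.

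This last point is the crux, and it is exactly where the $C^*$-hypothesis enters (the analogous statement is false for general Banach algebras). The image of a $*$-homomorphism of $C^*$-algebras is a $C^*$-subalgebra, hence closed: factoring $\pi|_{\cI}$ through $\cI/\ker(\pi|_{\cI})$, which is again a $C^*$-algebra, yields an injective $*$-homomorphism, and injective $*$-homomorphisms of $C^*$-algebras are isometric. Thus $\pi(\cI)$ is isometrically isomorphic to a complete normed space, so it is complete and therefore closed in $\cA/\cJ$. I would simply cite the ingredients here—completeness of quotients and the isometry of injective $*$-homomorphisms—from \cite[Chapter 1]{dix-cast} rather than reprove them.

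Finally, $\cI+\cJ$ is an algebraic two-sided ideal, being a sum of two such, and it is self-adjoint because $\cI$ and $\cJ$ are; combined with the closedness just established, this exhibits it as a $C^*$-ideal and completes the proof. The only genuine obstacle is the closed-range fact for $*$-homomorphisms; once that is invoked, everything else is purely formal.
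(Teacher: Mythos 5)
Your proof is correct, and it takes a genuinely different route from the paper's. The paper argues by duality: it invokes the Luxemburg criterion (\cite[Theorem (2.4)]{lux}, with proof corrected in \cite[Remark 1.6 (b)]{voigt}), by which the closedness of $\cI+\cJ$ is equivalent to the identity $(\cI\cap\cJ)^0=\cI^0+\cJ^0$ of annihilators in $\cA^*$, and it verifies that identity by identifying each annihilator with the dual of the corresponding quotient and applying Hahn--Banach to the isometric embedding $\cA/(\cI\cap\cJ)\subseteq \cA/\cI\times\cA/\cJ$. You instead work upstairs: with $\pi\colon\cA\to\cA/\cJ$ the quotient map you record the identity $\cI+\cJ=\pi^{-1}\bigl(\pi(\cI)\bigr)$ and deduce closedness from the closed-range property of the $*$-homomorphism $\pi|_{\cI}$; every step is sound, and your final observation that $\cI+\cJ$ is a self-adjoint two-sided ideal closes the loop. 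It is worth noting that the two arguments draw on the same underlying $C^*$-input --- that injective $*$-homomorphisms are isometric --- which in the paper powers the isometric embedding of $\cA/(\cI\cap\cJ)$ and in your proof powers the completeness of $\pi(\cI)\cong\cI/(\cI\cap\cJ)$. Your version is the more elementary and self-contained of the two, avoiding dual spaces altogether; the paper's version has the virtue of exhibiting the lemma as a special case of the general Banach-space closed-sum criterion of Kober--Luxemburg--Voigt \cite{kob,lux,voigt}, which is precisely the connection elaborated in the remark following the lemma.
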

\begin{proof}
  Let $\cA$ be a $C^*$-algebra and $\cI,\cJ\subseteq \cA$ two ideals. According to \cite[Theorem (2.4)]{lux} (whose proof is corrected in \cite[Remark 1.6 (b)]{voigt}) we have to argue that the annihilator
  \begin{equation*}
    (\cI\cap \cJ)^0 = \{\phi\in \cA^*\ \text{such that}\ \phi|_{\cI\cap \cJ}= 0\}
  \end{equation*}
  is the algebraic sum of the annihilators $\cI^0$ and $\cJ^0$. Now, the latter two annihilators are
  \begin{equation*}
    \cI^0\cong (\cA/\cI)^*\text{ and }\cJ^0\cong (\cA/\cJ)^*,
  \end{equation*}
  and similarly
  \begin{equation*}
    (\cI\cap \cJ)^0\cong (\cA/\cI\cap \cJ)^*.
  \end{equation*}
  Denoting $\pi_\cI:\cA\to \cA/\cI$ and similarly for $\cJ$, the $C^*$ morphism
  \begin{equation*}
 \begin{tikzpicture}[auto,baseline=(current  bounding  box.center)]
  \path[anchor=base] 
  (0,0) node (1) {$\cA$} 
  +(4,0) node (2) {$\cA/\cI\times \cA/\cJ$}
  ;
  \draw[->] (1) to[bend left=0] node[pos=.5,auto] {$\scriptstyle (\pi_\cI, \pi_\cJ)$} (2);
 \end{tikzpicture}
\end{equation*}
factors through an isometric embedding
\begin{equation*}
  \cA/(\cI\cap \cJ)\subseteq \cA/\cI\times \cA/\cJ. 
\end{equation*}
The dual of the right hand side, which is simply the direct sum of $\cI^0$ and $\cJ^0$, surjects by the Hahn-Banach theorem onto the dual $(\cI\cap \cJ)^0$ of the left hand side; this finishes the proof.
\end{proof}

\begin{remark}
  \Cref{le:sumclsd} also follows from Kober's criterion for the sum of two closed subspaces of a Banach space to be closed (\cite[Theorem 1]{kob} or \cite[Theorem (1.1)]{lux}) or from \cite[Corollary 1.4]{voigt} (the latter requiring that $\cI^0+\cJ^0$ be weak$^*$-closed).
\end{remark}

\subsection{Bounded symmetric domains}\label{subse:bdd} 

The topic is vast, and we recall only minuscule scattered fragments of it. The claims and results listed without an accompanying citation can be found in any of the abundant sources on symmetric spaces in general and Hermitian non-compact symmetric spaces in particular: \cite{helg} offers a comprehensive account, \cite{viv} is a briefer (and very accessible) survey, \cite{loos} draws the connection between bounded symmetric domains and Jordan theory, and the introductory sections of the various papers cited below (e.g. \cite{kw,up0,up-alg}) will often provide faster access to the theory in ready-to-use form.

Recall (e.g. \cite[\S VIII.7]{helg}):
\begin{definition}\label{def:bdd}
  A {\it bounded symmetric domain} is a bounded domain (open connected subset of some $\bC^n$) $D$ each of whose points is the unique fixed point of some involutive holomorphic automorphism.

  $D$ is {\it irreducible} if it does not decompose as a non-trivial Cartesian product of two other bounded symmetric domains.
\end{definition}

The classification of bounded symmetric domains is the cornerstone of the theory: they are (e.g. \cite[Chapter VIII, Theorems 6.1 and 7.1]{helg}) precisely the symmetric spaces of the form $G/K$ where
\begin{itemize}
\item $G$ is a non-compact, semisimple, connected and center-less Lie group;  
\item $K\subset G$ is a maximal compact subgroup (whose center will then automatically be a circle group; \cite[Chapter VIII, Proposition 6.2]{helg}).
\end{itemize}
The irreducible bounded symmetric domains were classified by \'E. Cartan \cite{cart-cls}. An alternative, pithy classification can be found in \cite{wolf-cls}, and the list appears in any number of sources: \cite[\S X.6.3 in conjunction with Table V in \S X.6.2]{helg}, \cite[Table 1]{viv}, \cite[\S 4]{loos}, etc.

The structure of the boundary $\partial D$ of a bounded symmetric domain is best analyzed in the context of studying the {\it Jordan triple system} attached to $D$, as we now recall briefly.

\begin{definition}\label{def:jts}
  A {\it Hermitian positive Jordan triple system} (JTS for short) is a complex vector space $V$ equipped with a ternary operation
  \begin{equation*}
    (x,y,z)\mapsto \{x,y,z\}=:L(x,y)z,
  \end{equation*}
  linear in the two outer variables and antilinear in the second, and satisfying the following additional conditions
  \begin{enumerate}[(1)]
  \item $\{x,y,z\}=\{z,y,x\}$;
  \item we have the identity
    \begin{equation*}
      [L(x,y),L(u,v)] = L(\{x,y,u\},v) - L(u,\{v,x,y\});
    \end{equation*}
  \item the Hermitian form $(x,y)\mapsto L(x,y)$ is positive definite.    
  \end{enumerate}
  We occasionally suppress the commas in expressions $\{x,y,z\}$.
\end{definition}
See for instance \cite[Definition 2.38]{viv} or \cite[\S 5.4]{up-surv}. We will not work directly with Jordan triple systems to any significant extent, so the phrase `Jordan triple system' can always be assumed here to entail `Hermitian' and `positive definite'.

For the following notion(s) see \cite[\S 3]{loos} (a lengthier account) or \cite[p.178]{viv} and \cite[p.151]{up-surv} for a brief recollection.
\begin{definition}\label{def:trip}
  A {\it tripotent} in a Jordan triple system $(V,\{\cdots\})$ is an element $e$ such that $\{eee\}=e$.

  Two tripotents $e$ and $f$ are {\it orthogonal} if $\{eef\}=0$ (this is in fact a symmetric condition).

  A tripotent is {\it primitive} if it does not decompose as a sum of two non-zero orthogonal tripotents.

  Finally, the {\it rank} of a tripotent $e$ is the largest number of summands in a decomposition of $e$ as a sum of mutually-orthogonal primitive tripotents.
\end{definition}
Tripotents are sometimes referred to as `idempotents' in the literature (e.g. \cite{amrt}), and the ternary operation is sometimes scaled so that the appropriate definition entails $\{eee\}=2e$ instead. 

Every non-zero element $x\in V$ (for a positive Hermitian JTS $(V,\{\cdots\})$) decomposes uniquely as
\begin{equation*}
  x=\lambda_1 e_1+\cdots+\lambda_s e_s
\end{equation*}
where
\begin{equation*}
  0<\lambda_1<\cdots<\lambda_s
\end{equation*}
and the $e_i$ are mutually orthogonal tripotents (\cite[Corollary 3.12]{loos}). Associating $\lambda_s$ to $x$ produces the {\it spectral norm} on $V$, Finally, to circle back to bounded symmetric domains, the foundational result (\cite[Theorem 4.1]{loos}, due originally to Koecher \cite{koe}) is that bounded symmetric domains can be realized essentially uniquely as spectral-norm open unit balls in (positive Hermitian) Jordan triple systems.

\begin{notation}
  For a bounded symmetric domain $D$ we write $V(D)$ (or simply $V$ when $D$ is implicit) for the underlying complex vector space of the JTS attached to $D$.
\end{notation}

Henceforth, whenever working with a bounded symmetric domain $D$, we assume it realized as the spectral open unit ball of its associated JTS $V(D)$. The compact group $K$ in the realization $D\cong G/K$ discussed above acts as a group of automorphisms of $V(D)$ as a Jordan triple system (i.e. preserves all of the structure).

With this in place, one possible approach (e.g. \cite[\S 5.4]{loos}) to the rank of a bounded symmetric domain is
\begin{definition}\label{def:rk}
  Let $D$ be a bounded symmetric domain.  The {\it rank} of $D$ is the maximal rank of a tripotent in $V(D)$ in the sense of \Cref{def:trip}. 
\end{definition}

Let $D$ be a bounded symmetric domain of rank $r$. For each $0\le k\le r$, the space $T_k\subset V$ of rank-$k$ tripotents is a smooth manifold contained in $\partial D$, invariant under the action of $K$, and connected and homogeneous under that $K$-action when $D$ is irreducible \cite[Corollary 5.12]{loos}.

\begin{definition}\label{def:shil}
  The {\it Shilov (or Bergman-Shilov) boundary} $\check{S}(D)$ of a bounded symmetric domain $D$ of rank $r$ is its manifold $T_r\subset \partial D$ of maximal-rank tripotents.
\end{definition}
See also \cite[Theorem 6.5]{loos} for alternative characterizations of $\check{S}(D)$. It turns out that (as the name suggests) it is indeed a Shilov boundary in the functional-analytic sense \cite[Exercise 2.27]{doug-bk}: it is the smallest closed subset of $\partial D$ on which every function continuous on $\overline{D}$ and holomorphic on $D$ achieves its maximum. Note that $\check{S}(D)$ is {\it always} homogeneous under the $K$ action, regardless of whether $D$ is irreducible \cite[Theorem 5.3]{loos}.

\section{Toeplitz algebras}\label{se:toeprel}

Let $D$ be a bounded symmetric domain. By the homogeneity of $\check{S}:=\check{S}(D)$ under the action of the compact group $K$, there is a unique $K$-invariant probability measure $\mu$ on $\check{S}$. This affords a Hilbert space $L^2(\check{S}):=L^2(\check{S},\mu)$ (we will typically omit $\mu$), as well as

\begin{definition}\label{def:hardy}
  The {\it Hardy space} $H^2(\check{S}(D))$ is the closure in $L^2(\check{S}(D))$ of the space of (restrictions to $\check{S}(D)$ of) polynomials on $V:=V(D)$.

  The {\it Szeg\"o projection} (associated to all of this data) is the orthogonal projection $P:L^2(\check{S})\to H^2(\check{S})$.
\end{definition}
This is one possible definition for Hardy spaces (see e.g. \cite[\S 1]{up0}, which in turn cites \cite[\S 4]{kor-poi}). 
\begin{definition}\label{def:toepalg}
  Let $D$ be a bounded symmetric domain and $f\in C(\check{S})$ a continuous function on its Shilov boundary $\check{S}:=\check{S}(D)$. The {\it Toeplitz operator} $T_f$ on $H^2(\check{S})$ is $PM_f$, where
  \begin{itemize}
  \item $P:L^2(\check{S})\to H^2(\check{S})$ is the Szeg\"o projection as in \Cref{def:hardy}, and
  \item $M_f$ is (the restriction to $H^2(\check{S})\subset L^2(\check{S})$ of) the multiplication-by-$f$ operator.
  \end{itemize}
  $f$ is the {\it symbol} of the associated Toeplitz operator $T_f$.  
  
  The {\it Toeplitz (or Hardy-Toeplitz) algebra} $\cT(D)$ attached to $D$ is the $C^*$-subalgebra of $\cB(H^2(\check{S}))$ generated by all $T_f$ for continuous $f\in C(\check{S})$. 
\end{definition}
For structure theory on $\cT(D)$ the reader can consult \cite{up0,up-alg} as well as the earlier sources \cite{bc-wh,bck-wh,cob-sing}, which treat particular cases of bounded symmetric domains.


According to \cite[Theorem 3.12]{up-alg} $\cT(D)$ admits a filtration
\begin{equation}\label{eq:filt}
  \{0\}=\cI_{-1}\subset \cI_0\subset \cdots\subset \cI_r=\cT(D)
\end{equation}
where
\begin{itemize}
\item $r$ is the rank of the symmetric domain $D$;
\item each subquotient $\cI_s/\cI_{s-1}$, $s\in 0..r$ is isomorphic to $C(T_s)\otimes \cK(H_s)$ for some Hilbert space $H_s$, where $T_s\subset \partial D$ is the compact manifold of rank-$s$ tripotents in $V$; 
\item $\cI_0=\cK(H)$ for the main Hardy space $H=H^2(\check{S}(D))$ on which $\cT(D)$ is realized faithfully.
\end{itemize}

Note that our numbering for the ideals $\cI_j$ is shifted as compared to \cite{up-alg}: in the latter $\cI_0$ is the trivial ideal, whereas here it is $\cI_{-1}$ that vanishes.

\begin{notation}\label{not:trip}
  For a bounded symmetric domain $D$ we write $T_{s,D}$ or $T_s(D)$ for the compact manifold of rank-$s$ tripotents in the Jordan triple system attached to $D$.

  Similarly, we write $\cI_{j,D}$ or $\cI_j(D)$ for the ideals $\cI_j$ appearing in the filtration \Cref{eq:filt}. 
\end{notation}

\begin{remark}\label{re:i0char}
  It follows from \cite[Corollaries 3.11 and 3.13]{up-alg} that $\cI_0$ is a {\it characteristic} ideal, i.e. preserved by all automorphisms of $\cT(D)$. Indeed, those results ensure that $\cI_0$ is precisely the common kernel of all non-faithful irreducible representations of $\cT(D)$.

  In fact, we will see in \Cref{pr:char} that the ideals \Cref{eq:filt} are all characteristic. 
\end{remark}

\begin{proposition}\label{pr:char}
  Let $D$ be a rank-$r$ bounded symmetric domain and $\cK$ an elementary $C^*$-algebra. Then, the filtration of $\cT(D)\otimes \cK$ obtained by tensoring \Cref{eq:filt} with $\cK$ is characteristic, in the sense that $\cI_j\otimes \cK$ is preserved by every automorphism of $\cT(D)\otimes \cK$. 
\end{proposition}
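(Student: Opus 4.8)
The plan is to reduce the statement to a purely topological assertion about the spectrum, and then to characterise each ideal of the filtration by an invariant that is manifestly preserved by self-homeomorphisms. Write $\cA:=\cT(D)\otimes\cK$. Since $\cK$ is elementary and hence nuclear, tensoring the filtration \Cref{eq:filt} by $\cK$ is exact and yields a filtration of $\cA$ whose subquotients are $(\cI_s/\cI_{s-1})\otimes\cK\cong C(T_s)\otimes\cK(H_s)\otimes\cK$; moreover the standard correspondence $\cI\mapsto\cI\otimes\cK$ is a lattice isomorphism from the closed ideals of $\cT(D)$ onto those of $\cA$, inducing a homeomorphism $\widehat{\cA}\cong\widehat{\cT(D)}$ under which $\cI_j\otimes\cK$ corresponds to the open set $U_j:=\widehat{\cI_j}$. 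An automorphism $\alpha$ of $\cA$ induces a self-homeomorphism $\widehat{\alpha}$ of $\widehat{\cA}$, and $\alpha(\cI_j\otimes\cK)$ is the ideal corresponding to $\widehat{\alpha}(U_j)$. Thus it suffices to show that each $U_j$ is invariant under \emph{every} self-homeomorphism of $\widehat{\cT(D)}$, i.e. that the chain $U_{-1}\subset U_0\subset\cdots\subset U_r$ is determined by the topology alone.

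As a set $\widehat{\cT(D)}=\bigsqcup_{s=0}^r T_s$, with each $U_s=\bigsqcup_{k\le s}T_k$ open, each stratum $T_s=U_s\setminus U_{s-1}$ a Hausdorff locally closed subspace (the spectrum of the continuous-trace subquotient $C(T_s)\otimes\cK(H_s)$), and $T_0$ a single point corresponding to $\cI_0=\cK(H)$, which by \Cref{re:i0char} is the unique generic (dense) point. The invariant I would use is the specialisation length: for $x\in\widehat{\cT(D)}$ let $\ell(x)$ be the supremum of the lengths $k$ of chains $x_0\to x_1\to\cdots\to x_k=x$ ending at $x$ in the sense of \Cref{def:chain,def:spec}. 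This is preserved by every self-homeomorphism, so it is enough to prove $\ell\equiv s$ on $T_s$, whence $U_s=\{x:\ell(x)\le s\}$ is canonical.

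For the upper bound, specialisation cannot decrease the stratum index: if $x\in T_i$, $y\in T_{i'}$ and $y\in\overline{\{x\}}$ with $i'<i$, then $U_{i-1}\supseteq U_{i'}$ is an open neighbourhood of $y$, forcing $x\in U_{i-1}$ and contradicting $x\in T_i$. Since each stratum is Hausdorff one has $\overline{\{x\}}\cap T_s=\{x\}$ for $x\in T_s$, so there is no nontrivial specialisation internal to a stratum; hence a nontrivial chain has strictly increasing indices and $\ell(x)\le s$ for $x\in T_s$. The reverse inequality is the substantive point: for each $y\in T_s$ I must produce a chain of length exactly $s$, which by the strict increase just proved must pass through every intermediate stratum. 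This reduces to the claim that consecutive strata are linked by specialisation, namely that every $y\in T_s$ is a specialisation of some $x\in T_{s-1}$; one then concatenates such links down to the generic point $T_0$.

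The hard part is precisely this last claim, and the delicate issue is that essentiality of the extensions — equivalently density of $T_{s-1}$ in $\widehat{\cT(D)/\cI_{s-2}}=\bigsqcup_{k\ge s-1}T_k$, which holds because $\cK(H)$ is essential in $\cT(D)$ and, as I would extract from \cite{up-alg}, propagates up the filtration — guarantees only that $y$ lies in the closure of the \emph{union} $T_{s-1}=\bigcup_x\{x\}$, not in the closure $\overline{\{x\}}$ of a single point. To bridge this gap I would invoke the $K$-equivariance of the whole picture: $K$ acts on $\cT(D)$ by filtration-preserving automorphisms (via its $K$-invariant-measure-preserving action on $H^2(\check{S})$), hence on $\widehat{\cT(D)}$ preserving each stratum and the specialisation relation. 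For irreducible $D$ each $T_s$ is a single $K$-orbit, so the $K$-invariant set $\{y\in T_s:\ y\in\overline{\{x\}}\text{ for some }x\in T_{s-1}\}$ is either empty or all of $T_s$; it then suffices to exhibit a single specialisation link between $T_{s-1}$ and $T_s$, which I would read off from the explicit description of the irreducible representations and their containments in \cite{up-alg}, and the reducible case \Cref{eq:ddis} is handled through the product structure of the strata. Once the links are established the lower bound follows, $U_s=\{\ell\le s\}$ is topological, and the proposition is proved.
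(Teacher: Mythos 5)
Your proposal is correct and follows the same core strategy as the paper's proof: pass to the spectrum via the bijection between ideals and open sets, and pin down each $\widehat{\cI_k}$ by a specialization-chain invariant that every self-homeomorphism must preserve (the reduction eliminating $\cK$ is also the same). The difference lies in how the chain characterization is established. The paper simply quotes \cite[Lemma~3.4]{up-alg} for the statement that the points of $\widehat{\cI_k/\cI_{k-1}}$ are exactly the endpoints of \emph{maximal} length-$k$ chains, whereas you reconstruct most of its content: you prove the upper bound $\ell\le s$ on the stratum $T_s$ elementarily (strict increase of stratum indices along specialization, no internal specialization by Hausdorffness of the strata), and you work with the supremum of chain lengths rather than maximal chains, which conveniently sidesteps any need to verify maximality or refinability of chains. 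This added self-containedness is a genuine gain over the paper's proof, which outsources everything to Upmeier. One simplification worth noting: your $K$-equivariance detour for the lower bound---which carries the mildly delicate burden of showing $K$ preserves the filtration \Cref{eq:filt} \emph{before} the proposition is available---is unnecessary, because \cite{up-alg} supplies the consecutive-strata links pointwise rather than at a single point: for tripotents $f>e$ in the dominance order one has $\ker\sigma_e\subseteq\ker\sigma_f$ (these are exactly the kernel containments the paper itself exploits later in the proof of \Cref{th:trivact}), so writing a rank-$s$ tripotent $f$ as $e+e'$ with $e$ of rank $s-1$ gives a specialization $[\sigma_e]\to[\sigma_f]$ ending at an arbitrary point of $T_s$, and your essentiality/density step can be dropped as well. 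Your one-line disposal of the reducible case is also fine as stated, since the Toeplitz algebras involved are type~I, spectra of tensor products are products of spectra, and the closure of a point in a product is the product of the closures, so links descend from the irreducible factors.
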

\begin{proof}
  To simplify matters we ignore $\cK$ and work with $\cT:=\cT(D)$ throughout, but the arguments transport verbatim, via the general remark that for a $C^*$-algebra $\cA$ the irreducible representations of $\cA\otimes \cK(H)$ are precisely
  \begin{equation*}
    \pi\otimes \rho:\cA\otimes \cK\to \cB(H_{\pi})\otimes \cB(H),
  \end{equation*}
  where $\pi:\cA\to \cB(H_{\pi})$ is an irreducible $\cA$-representation and $\rho$ is the unique irreducible representation of $\cK=\cK(H)$.

  Let $\alpha$ be an automorphism of $\cT$, inducing a self-homeomorphism (denoted abusively by the same symbol) of the spectrum $\widehat{\cT}$.

  It follows from \cite[Lemma 3.4]{up-alg} that for $k\in 0..r$ the elements of the locally closed subset
  \begin{equation}\label{eq:kk-1}
    \widehat{\cI_k/\cI_{k-1}}\subseteq \widehat{\cT}
  \end{equation}
  are precisely those points $x\in \widehat{\cT}$ that fit into a maximal chain
  \begin{equation*}
    x_0\to\cdots\to x_k=x
  \end{equation*}
  in the sense of \Cref{def:chain}, and hence the open subset 
  \begin{equation*}
    \widehat{\cI_k}\subseteq \widehat{\cT}
  \end{equation*}
  consists of the targets $x$ of maximal chains 
  \begin{equation*}
    x_0\to\cdots\to x_k'=x\text{ for some }k'\le k.
  \end{equation*}
  The property of fitting into such a chain is purely topological in nature, and hence invariant under $\alpha$. Since
  \begin{equation*}
    \cI\mapsto \widehat{\cI}
  \end{equation*}
  is a bijection between ideals of a $C^*$-algebra and open subsets of its spectrum (\cite[\S 3.2.2]{dix-cast}), the $\alpha$-invariance of $\widehat{\cI_k}$ entails that of $\cI_k$.
\end{proof}


The characterization of the points in \Cref{eq:kk-1} via chains used in the proof of \Cref{pr:char} also gives the following procedure for reconstructing $r$ from $\cT(D)$ via the notion of length of a solvable $C^*$-algebra in the sense of \Cref{def:solv}.

\begin{lemma}\label{le:islng}
  For a rank-$r$ bounded symmetric domain $D$ and an elementary algebra $\cK$ the length of the solvable $C^*$-algebra $\cT(D)\otimes \cK$ is $r$.
  \qedhere
\end{lemma}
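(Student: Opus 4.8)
The plan is to establish the two inequalities $\mathrm{length}(\cT(D)\otimes\cK)\le r$ and $\mathrm{length}(\cT(D)\otimes\cK)\ge r$ separately, using the filtration \Cref{eq:filt} tensored with $\cK$ for the upper bound and the chain-characterization from the proof of \Cref{pr:char} for the lower bound.

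For the upper bound, I would simply exhibit the tensored filtration $\{0\}=\cI_{-1}\otimes\cK\subset\cI_0\otimes\cK\subset\cdots\subset\cI_r\otimes\cK=\cT(D)\otimes\cK$ as a witness. Each subquotient is $(\cI_s/\cI_{s-1})\otimes\cK\cong C(T_s)\otimes\cK(H_s)\otimes\cK$, which by associativity of the tensor product and $\cK(H_s)\otimes\cK(H)\cong\cK(H_s\otimes H)$ is again of the form $C(T_s)\otimes\cK(H_s')$ with $H_s'=H_s\otimes H$. So the first condition in \Cref{def:solv} holds. The remaining obligation is that the sequence $\dim H_s'=\dim H_s\cdot\dim H$ be non-increasing in $s$; since multiplying by the fixed factor $\dim H$ preserves monotonicity, this follows from the corresponding monotonicity of $\dim H_s$ along \Cref{eq:filt}, which is part of what it means for $\cT(D)$ itself to be solvable (indeed \cite[Theorem 3.12]{up-alg} or the solvability statement implicit in \Cref{eq:filt} supplies it). Hence $\mathrm{length}\le r$.

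For the lower bound I would argue that no shorter filtration can exist, and this is where the spectral/topological input is essential. The key observation, already extracted in the proof of \Cref{pr:char}, is that the spectrum $\widehat{\cT(D)}$ contains a maximal specialization chain of length exactly $r$: a sequence $x_0\to x_1\to\cdots\to x_r$ with each $x_k\in\widehat{\cI_k/\cI_{k-1}}$. Passing to $\cT(D)\otimes\cK$ does not change the spectrum as a topological space, since tensoring with an elementary algebra $\cK(H)$ induces a homeomorphism $\widehat{\cA}\cong\widehat{\cA\otimes\cK}$ via $\pi\mapsto\pi\otimes\rho$ (the Morita-equivalence picture, with $\rho$ the unique irreducible representation of $\cK$), so the same length-$r$ chain survives. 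I would then invoke the general principle that in any solvable filtration the subquotients $C_0(X_j)\otimes\cK(H_j)$ have spectra $X_j$ whose points are pairwise non-comparable under specialization within a single layer (a point of $\widehat{C_0(X_j)\otimes\cK}=X_j$ is closed in $\widehat{\cI_j/\cI_{j-1}}$, i.e. the subquotient is of type $I_0$ / has Hausdorff spectrum), so that any specialization $x\to y$ with $x\ne y$ must strictly drop the filtration index. Consequently a chain of length $r$ forces at least $r+1$ distinct filtration levels, i.e. $r$ steps, giving $\mathrm{length}\ge r$.

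The main obstacle I anticipate is the lower-bound step: making rigorous the claim that specialization strictly decreases the filtration index, which requires knowing that each subquotient layer has Hausdorff (hence specialization-trivial) spectrum and that the filtration levels are ordered compatibly with specialization. This is exactly the content encoded in \cite[Lemma 3.4]{up-alg} as used in \Cref{pr:char}, so I would lean on that lemma rather than reprove it. A secondary technical point is confirming that the spectral homeomorphism $\widehat{\cA}\cong\widehat{\cA\otimes\cK}$ is compatible with the ideal lattice (i.e. carries $\widehat{\cI_k}$ to $\widehat{\cI_k\otimes\cK}$), which is routine from the $\pi\mapsto\pi\otimes\rho$ description already invoked in \Cref{pr:char} and needs only to be stated.
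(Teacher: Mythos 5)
Your proof is correct and is essentially the paper's own argument: the lemma is stated there with no separate proof precisely because it is meant to follow from the ingredients you assemble, namely the tensored filtration \Cref{eq:filt} for the upper bound (with the cardinal-arithmetic check on $\dim H_s$ that you supply), and for the lower bound the maximal-chain characterization of the layers $\widehat{\cI_k/\cI_{k-1}}$ from the proof of \Cref{pr:char} together with the homeomorphism $\widehat{\cA}\cong\widehat{\cA\otimes\cK}$ via $\pi\mapsto\pi\otimes\rho$. The only slip is directional: with the convention $x\to y\iff y\in\overline{\{x\}}$ and the sets $\widehat{\cJ_j}$ open, a proper specialization strictly \emph{raises} the filtration index rather than dropping it (the dense point of $\widehat{\cI_0}$ specializes to the closed points of $\check{S}(D)$), but since your counting argument only uses strict monotonicity of the index along a chain, the conclusion $\mathrm{length}\ge r$ is unaffected.
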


\begin{remark}
  The length-$r$ claim is part of \cite[Theorem 4.11.133]{up-bk}, but as far as I can tell it is not actually argued there that the length is $r$ rather than smaller. This is in principle conceivable, since the existence of a filtration as in \Cref{def:solv} does not necessarily preclude a {\it shorter} filtration of that form.
\end{remark}

\subsection{Reconstructing irreducible domains}\label{subse:recirr}

Below, we will repeatedly encounter isomorphisms
\begin{equation}\label{eq:stabiso}
  \cA\otimes \cK\cong \cB\otimes \cK,
\end{equation}
where
\begin{itemize}
\item $\cA$ and $\cB$ are Toeplitz algebras (attached to two bounded symmetric domains, a priori different) and
\item $\cK=\cK(H)$ is the algebra of compact operators on an $\aleph_0$-dimensional Hilbert space $H$. 
\end{itemize}
As is customary (e.g. \cite[p.337]{br-stab}), we refer to \Cref{eq:stabiso} as a {\it stable isomorphism} between $\cA$ and $\cB$.

\begin{theorem}\label{th:stabiso}
  If $D_1$ and $D_2$ are two irreducible bounded symmetric domains such that $\cT(D_i)$ are stably isomorphic then $D_i$ are isomorphic.
\end{theorem}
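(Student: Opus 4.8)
The plan is to extract from the stable isomorphism $\cT(D_1)\otimes\cK\cong\cT(D_2)\otimes\cK$ a list of topological invariants of the two domains, and then to recover $D_i$ from these by appealing to the classification of irreducible bounded symmetric domains recalled in \Cref{se:a}.

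First I would record that the rank is an invariant. By \Cref{le:islng} the length of the solvable $C^*$-algebra $\cT(D_i)\otimes\cK$ equals $r_i:=\mathrm{rank}(D_i)$, and length is manifestly preserved by isomorphism; hence $r_1=r_2=:r$. (Equivalently, the maximal length of a specialization chain in $\widehat{\cT(D_i)}\cong\widehat{\cT(D_i)\otimes\cK}$ is a homeomorphism invariant equal to $r$.)

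Next I would match up the two filtrations. The crucial observation, already implicit in the proof of \Cref{pr:char}, is that the open subsets $\widehat{\cI_k}\subseteq\widehat{\cT(D_i)}$ admit a purely topological description: $\widehat{\cI_k}$ consists of the targets of maximal specialization chains of length $\le k$. This description is intrinsic to the spectrum, so the homeomorphism $\widehat{\cT(D_1)}\cong\widehat{\cT(D_2)}$ induced by the stable isomorphism carries $\widehat{\cI_k(D_1)}$ onto $\widehat{\cI_k(D_2)}$ for every $k$; equivalently the isomorphism restricts to compatible isomorphisms $\cI_k(D_1)\otimes\cK\cong\cI_k(D_2)\otimes\cK$. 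Passing to subquotients and absorbing the elementary factors $\cK(H_s)$ into $\cK$ (using $\cK(H_s)\otimes\cK\cong\cK$ for $H_s\ne 0$) yields, for each $0\le s\le r$,
\begin{equation*}
  C(T_s(D_1))\otimes\cK\;\cong\;C(T_s(D_2))\otimes\cK.
\end{equation*}
Taking spectra (or equivalently the centers of the multiplier algebras) of these continuous-trace algebras recovers a homeomorphism $T_s(D_1)\cong T_s(D_2)$ of the compact manifolds of rank-$s$ tripotents; in particular $\check{S}(D_1)=T_r(D_1)\cong T_r(D_2)=\check{S}(D_2)$.

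At this point the $C^*$-algebraic input is exhausted: from the stable isomorphism we have recovered the rank $r$ together with the homeomorphism types — hence the real dimensions and other topological invariants — of all the manifolds $T_s$, $0\le s\le r$. The final step is to verify that this package of invariants determines an irreducible bounded symmetric domain uniquely, which I would do by inspecting the classification and the tabulated data in \Cref{se:a}. I expect this last step to be the main obstacle: the multiplicities $\dim H_s$ are destroyed by stabilization, so one is left only with the bare topology of the tripotent strata, and one must confirm that the pair $(r,\{T_s\})$ already separates every pair of non-isomorphic irreducible domains. The genuine low-dimensional coincidences among the Cartan types pose no threat, since they are honest isomorphisms of domains and therefore consistent with the claim; the content is purely that no two inequivalent types share all of these invariants.
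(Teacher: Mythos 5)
Your first two steps coincide with the paper's \Cref{le:samerk}: rank via the solvable length (\Cref{le:islng}), then the chain-characterization of $\widehat{\cI_k}$ from \Cref{pr:char} to get homeomorphisms $T_s(D_1)\cong T_s(D_2)$ for all $s$, in particular of the Shilov boundaries. The genuine gap is exactly the step you defer: you end with the package $(r,\{T_s\})$ and say one "must confirm" it separates irreducible domains, but you give no argument, and the appendix as written cannot discharge this for you. \Cref{cor:complinv} takes as input the rank, the dimension of $D$ \emph{itself}, and the Shilov boundary --- and $\dim D$ is not among your invariants --- while \Cref{tab:tube,tab:ntube} do not record the dimensions of the intermediate tripotent manifolds $T_s$. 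So "inspecting the tabulated data" would require either new computations or a new argument; as submitted, the decisive separation statement is asserted, not proved.

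Your claim that "at this point the $C^*$-algebraic input is exhausted" is also where you part ways with the paper, which extracts strictly more from the stable isomorphism. In \Cref{le:samedim}, the induced map on spectra matches an irreducible representation $\rho_1=\sigma_{e_1}$ attached to a rank-$k$ tripotent $e_1$ to one attached to a rank-$k$ tripotent $e_2$, and --- crucially --- identifies the \emph{images} $\rho_i(\cT_i)\cong\cT(D_{i,e_i})$, i.e.\ gives an isomorphism between the Toeplitz algebras of the lower-rank irreducible peripheral domains $D_{i,e_i}$. Induction on rank (base case: rank-$1$ domains are balls, determined by their Shilov spheres) then yields $D_{1,e_1}\cong D_{2,e_2}$, and the boundary-stratification formula $\dim D=\max_k(\dim T_k(D)+\dim D_e)$ recovers $\dim D$; with rank, dimension and Shilov boundary in hand, \Cref{cor:complinv} (tube type being detected by the first Betti number of $\check{S}(D)$) finishes. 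For what it is worth, your route can in fact be completed without the induction: a Peirce-decomposition computation gives $\dim_{\bR}T_k=k+\tfrac{a}{2}k(k-1)+2ak(r-k)+2kb$ in terms of the characteristic multiplicities $(a,b)$, so for $r\ge 2$ the dimensions of $T_1$ and $T_2$ determine $(a,b)$, and $(r,a,b)$ is a complete invariant of irreducible domains modulo the honest low-rank coincidences (which, as you note, are harmless). But that verification is precisely the missing content of your final step, and nothing in the proposal supplies it.
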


We prove a series of auxiliary results.

\begin{lemma}\label{le:samerk}
  Under the hypotheses of \Cref{th:stabiso}
  \begin{enumerate}[(1)]
  \item\label{item:1} the domains $D_i$ have the same rank $r$;
  \item\label{item:2} for $s\in 1..r$ we have homeomorphisms $T_{s}(D_1)\cong T_s(D_2)$ for $T_s$ defined in \Cref{not:trip}.
  \item\label{item:3} $D_i$ have isomorphic Shilov boundaries $\check{S}(D_i)$. 
  \end{enumerate}
\end{lemma}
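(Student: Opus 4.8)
The plan is to exploit the fact that the entire filtration \Cref{eq:filt} can be reconstructed purely from the topology of the spectrum, so that it survives transport across the stable isomorphism. Write $\phi\colon\cT(D_1)\otimes\cK\to\cT(D_2)\otimes\cK$ for the given stable isomorphism. For part \Cref{item:1} I would simply invoke \Cref{le:islng}: the length of a solvable $C^*$-algebra is manifestly an isomorphism invariant, so the isomorphic algebras $\cT(D_1)\otimes\cK$ and $\cT(D_2)\otimes\cK$ have equal length, and that common length is by \Cref{le:islng} simultaneously the rank $r_1$ of $D_1$ and the rank $r_2$ of $D_2$. Hence $r_1=r_2=:r$.

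For part \Cref{item:2}, the key observation is that the homeomorphism of spectra induced by $\phi$ carries the filtration of $\cT(D_1)\otimes\cK$ onto that of $\cT(D_2)\otimes\cK$ level by level. First recall that tensoring with $\cK$ leaves the spectrum unchanged, $\widehat{\cT(D_i)\otimes\cK}\cong\widehat{\cT(D_i)}$ via $\pi\mapsto\pi\otimes\rho$ exactly as in the proof of \Cref{pr:char}. That same proof shows that $\widehat{\cI_k\otimes\cK}$ admits the intrinsic description as the set of targets of maximal chains (\Cref{def:chain}) of length at most $k$; since $\phi$ induces a homeomorphism of spectra and maximality of chains is a purely topological notion preserved by homeomorphisms, $\phi$ must match $\widehat{\cI_k(D_1)\otimes\cK}$ with $\widehat{\cI_k(D_2)\otimes\cK}$ for every $k$, and hence match the locally closed subquotient spectra $\widehat{\cI_s/\cI_{s-1}}$ on the two sides. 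It then remains only to identify those subquotient spectra: since $\cI_s/\cI_{s-1}\cong C(T_s)\otimes\cK(H_s)$, its spectrum is homeomorphic to $T_s$, so $\phi$ delivers the asserted homeomorphism $T_s(D_1)\cong T_s(D_2)$.

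Part \Cref{item:3} is then immediate, being the top level of part \Cref{item:2}: by \Cref{def:shil} one has $\check{S}(D)=T_r(D)$, so the case $s=r$ of part \Cref{item:2}, together with $r_1=r_2$ from part \Cref{item:1}, gives $\check{S}(D_1)=T_r(D_1)\cong T_r(D_2)=\check{S}(D_2)$. The one point demanding care, rather than a genuine obstacle once \Cref{pr:char,le:islng} are available, is the transport step in part \Cref{item:2}: one must be sure the chain-theoretic description of the ideals is genuinely intrinsic to the spectrum, so that it applies to an isomorphism between two \emph{a priori} different algebras and not merely to automorphisms of a single one. Everything else is bookkeeping, namely identifying the subquotient spectra with the tripotent manifolds $T_s$ and reading off the $s=r$ case for the Shilov boundary.
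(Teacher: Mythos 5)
Your proof is correct and takes essentially the same route as the paper's: part (1) via \Cref{le:islng}, part (2) by transporting the purely topological chain-theoretic characterization of the ideals from the proof of \Cref{pr:char} across the induced homeomorphism of spectra and identifying $\widehat{\cI_s/\cI_{s-1}}\cong T_s$, and part (3) as the case $s=r$. Your explicit caution that the chain description must be intrinsic to the spectrum---so that it applies to an isomorphism between two \emph{different} algebras and not merely to automorphisms of one---is exactly the point the paper leaves implicit when it cites the argument of \Cref{pr:char}.
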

\begin{proof}
  \Cref{item:1} is immediate from \Cref{le:islng}. As for \Cref{item:2}, it follows from the argument in the proof of \Cref{pr:char}, giving a purely topological characterization of 
  \begin{equation*}
    T_{s}(D)\cong \widehat{\cI_s(D)/\cI_{s-1}(D)}\subset \widehat{\cT(D)}
  \end{equation*}
  (see \Cref{not:trip}).

  Finally, \Cref{item:3} is a consequence of \Cref{item:1} and \Cref{item:2}:
  \begin{equation*}
    \check{S}(D_1) = T_r(D_1)\cong T_r(D_2) = \check{S}(D_2).
  \end{equation*}
\end{proof}


\begin{lemma}\label{le:samedim}
Under the hypotheses of \Cref{th:stabiso} $D_i$ have the same dimension. 
\end{lemma}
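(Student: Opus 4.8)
The plan is to show that the complex dimension $n:=\dim_\bC D$ can be read off from topological invariants that \Cref{le:samerk} already declares common to $D_1$ and $D_2$. First I would record that \Cref{le:samerk} supplies a common rank $r$ together with homeomorphisms $T_s(D_1)\cong T_s(D_2)$ for every $s\in 1..r$. Since each $T_s$ is a connected smooth manifold (homogeneous under $K$ in the irreducible case), invariance of domain forces $\dim_\bR T_s(D_1)=\dim_\bR T_s(D_2)$. Thus the tuple $(r;\dim_\bR T_1,\dots,\dim_\bR T_r)$ is a stable-isomorphism invariant, and it suffices to express $n$ as a function of these numbers.

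Next I would bring in the numerical classification: an irreducible $D$ carries two Peirce multiplicities $a,b$ with $n=r+\binom r2 a+rb$, and the real dimension of $T_s$ can be computed from the Peirce decomposition of $V$ relative to a rank-$s$ tripotent $e$. The tangent space to $T_s$ at $e$ is spanned by the ``imaginary'' directions of the Peirce-$1$ Jordan algebra $V_1(e)$ (contributing its complex dimension $s+\binom s2 a$) together with the full Peirce-$\tfrac12$ space $V_{1/2}(e)$ (contributing $2\dim_\bC V_{1/2}(e)=2(s(r-s)a+sb)$). This gives
\[
  \dim_\bR T_s=\Big(s+\binom s2 a\Big)+2\big(s(r-s)a+sb\big),
\]
which is affine in $(a,b)$ with coefficients depending only on $r$ and $s$; these are exactly the data tabulated in \Cref{se:a}.

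Finally I would solve for the multiplicities. For $r\ge 2$ the equations coming from $s=1$ and $s=r$ read $\dim_\bR T_1=1+2(r-1)a+2b$ and $\dim_\bR T_r=r+\binom r2 a+2rb$; their coefficient determinant is $3r(r-1)\neq 0$, so $a$ and $b$ — and hence $n=r+\binom r2 a+rb$ — are determined by the shared quantities $r,\dim_\bR T_1,\dim_\bR T_r$, whence $n(D_1)=n(D_2)$. The rank-one case is immediate, since then $\check S(D)$ is a sphere with $\dim_\bR\check S=2n-1$, returning $n$ directly. I expect the real work to lie in justifying the dimension formula for $T_s$ (the Peirce-theoretic identification of the tangent space to the tripotent manifold) and in checking the non-degeneracy that makes $(a,b)$, and therefore $n$, recoverable; everything after that is bookkeeping against the classification recalled in \Cref{se:a}.
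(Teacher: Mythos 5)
Your proposal is correct, but it takes a genuinely different route from the paper's. The paper proves \Cref{le:samedim} by induction on the rank: it uses Loos's stratification of $\partial D$ into strata fibering over the tripotent manifolds $T_k$ with fibers the lower-rank irreducible domains $D_e$, the resulting formula $\dim D=\max_k\,(\dim T_k+\dim D_e)$, and the operator-algebraic fact that the irreducible representation $\rho_e$ attached to a rank-$k$ tripotent has image isomorphic to $\cT(D_e)$, so that a stable isomorphism matches $D_{1,e_1}$ with $D_{2,e_2}$ and the induction hypothesis applies (the rank-one base case being exactly your sphere observation). You instead make the count non-inductive and purely numerical: \Cref{le:samerk} gives homeomorphisms $T_s(D_1)\cong T_s(D_2)$, hence equal dimensions by invariance of domain, and your Peirce-theoretic formula $\dim_\bR T_s=\bigl(s+\binom{s}{2}a\bigr)+2\bigl(s(r-s)a+sb\bigr)$ reduces everything to a $2\times 2$ linear system (from $s=1$ and $s=r$) whose determinant $3r(r-1)$ is nonzero for $r\ge 2$. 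I checked your formula against the data of \Cref{se:a}: e.g.\ for type $I_{p,q}$ it returns $\dim_\bR T_r=r+\binom{r}{2}a+2rb=2pq-q^2$, and for tube type ($b=0$) it gives $\dim\check{S}(D)=\dim_\bC D$, as it should; the determinant computation $4r(r-1)-r(r-1)=3r(r-1)$ is also right. The trade-off is clear: you must import more Jordan theory --- the identification $T_e(T_s)\cong iA(e)\oplus V_{1/2}(e)$ of the tangent space to the tripotent manifold (in Loos) and the joint Peirce dimension counts --- which, as you say, is where the real work lies, whereas the paper leans instead on the representation theory of $\cT(D)$ and boundary geometry at each rank. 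In exchange your method recovers not just $n$ but the full multiplicity pair $(a,b)$, and since by the classification the triple $(r,a,b)$ determines an irreducible domain up to isomorphism (the low-rank coincidences of parameters are genuine isomorphisms), your argument would in fact yield \Cref{th:stabiso} directly, bypassing \Cref{cor:complinv} and the tabular case analysis of \Cref{se:a}; it also shows only the two strata $T_1$ and $T_r$ are needed, not all of them.
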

\begin{proof}
  The proof will be by induction on the (common, by \Cref{le:samerk}) rank of $D_i$.

  {\bf Base case: rank 1.} The rank-1 domains are exactly those customarily labeled as type-$I_{p,1}$, $p\ge 1$ (e.g. \cite[Tables 1 and 7]{viv}), i.e. the open unit balls $\bB^{2p}$ in $\bC^p$ and their Shilov boundaries are the spheres $\bS^{2p-1}$. In particular the rank-1 domains are uniquely determined by their Shilov boundaries, which by \Cref{le:samerk} invariants of the (stable) Toeplitz algebras.

  {\bf Induction step.} For an irreducible boundary symmetric domain $D$ of rank $r>1$ it follows from \cite[Theorem 6.3]{loos} that the boundary of $D$ decomposes as a disjoint union of strata
  \begin{equation*}
    \cat{str}_k:=\cat{str}_k(D),\ k= 1..r
  \end{equation*}
  which fiber, respectively, over the manifolds $T_k(D)$ of rank-$k$ tripotents (see also \cite[Remark 6.9]{loos}). The fiber over a tripotent $e$ of rank $k$ is the bounded symmetric domain $D_e=D\cap V_0(e)$, where $V_0(e)$ is the annihilator of a linear operator associated to $e$. In particular we have
  \begin{equation}\label{eq:dimex}
    \dim D = \dim \partial D + 1 = \max_{k=1}^r~ (\dim T_k(D)+\dim D_e),
  \end{equation}
  where
  \begin{itemize}
  \item `$\dim$' denotes the covering dimension (e.g. the usual notion of dimension for manifolds) and
  \item $e$ on the right hand side is an arbitrary rank-$k$ tripotent, since the dimension of $D_e$ does not depend on the choice by the homogeneity of $T_k(D)$ under the automorphism group of $D$ (\cite[Corollary 5.12]{loos}).
  \end{itemize}
  
  As in the proof of \Cref{pr:char}, nothing below is affected by passing to stabilizations, so we work with an isomorphism $\alpha$ between $\cT_i:= \cT(D_i)$ instead of $\cT_i\otimes \cK$ in order to avoid overburdening the notation. By \Cref{eq:dimex} it will suffice to show that for each $k= 1..r$ we have an isomorphism
  \begin{equation*}
    D_{1,e_1}\cong D_{2,e_2}
  \end{equation*}
  for some rank-$k$ tripotents $e_i$ in the Jordan systems $V_i$ attached respectively to $D_i$. Moreover, because $D_e$ is simply a point when $e$ has maximal rank $r$, it is enough to do this for $1\le k\le r-1$. 

  Throughout the rest of the proof fix $1\le k\le r-1$ and a rank-$k$ tripotent $e_1\in V_1$, parametrizing an irreducible representation $\rho_1:\cT_1\to \cB(H^2(\check{S}(D_{e_1})))$ (\cite[discussion following Theorem 3.3]{up-alg}) whose class belongs to
  \begin{equation*}
    \widehat{\cI_k(D_1)/\cI_{k-1}(D_2)}\subset \widehat{\cT_1}. 
  \end{equation*}
  As argued in the proof of \Cref{le:samerk}, $\alpha$ transforms $\rho_1$ into an irreducible representation
  \begin{equation*}
    \rho_2:\cT_2\to \cB(H)
  \end{equation*}
  parametrized by some point in 
  \begin{equation*}
    \widehat{\cI_k(D_2)/\cI_{k-1}(D_2)}\subset \widehat{\cT_2},
  \end{equation*}
  i.e. a rank-$k$ tripotent $e_2\in V_2$. $\alpha$ will also identify the images $\rho_i(\cT_i)$ through these representations which, as explained on \cite[p.571]{up-alg}, are respectively isomorphic to $\cT(D_{i,e_i})$. In order to apply the induction hypothesis to obtain the desired isomorphism
  \begin{equation*}
    D_{1,e_1}\cong D_{2,e_2}
  \end{equation*}
  it suffices to observe that by \cite[Theorem 5.9]{loos}, if $D$ is irreducible then so is $D_e$ (for an arbitrary tripotent).
\end{proof}


\pf{th:stabiso}
\begin{th:stabiso}
  \Cref{le:samerk,le:samedim} show that the domains $D_i$ have the same rank and dimension and have isomorphic Shilov boundaries. By \Cref{cor:complinv} below, this is sufficient to identify an irreducible domain.
\end{th:stabiso}

\section{Automorphisms of tensor products}\label{se:auto}

Throughout the present section we work with a rank-$r$ bounded symmetric domain $D$, decomposable as a product
\begin{equation*}
  D=D_1\times\cdots\times D_s
\end{equation*}
of irreducible bounded symmetric domains $D_i$ of respective ranks $r_i$ (so that $r=\sum r_i$). We are interested in the automorphisms of the $C^*$-algebra
\begin{equation*}
  \cT:=\cT(D)\cong \cT(D_1)\otimes \cdots\otimes \cT(D_s).
\end{equation*}


Such an automorphism of $\cT$ will induce one of the Shilov boundary
\begin{equation}\label{eq:bdryprod}
  \check{S}(D)\cong \check{S}(D_1)\times \cdots\times \check{S}(D_s),
\end{equation}
since the latter is the spectrum of the top quotient $\cT(D)/\cI_{r-1}$ in the filtration \Cref{eq:filt}. Precisely which automorphisms of $\check{S}(D)$ arise in this manner is the subject of some prior work:
\begin{itemize}
\item The case of the unit disk $D=\bB^2$ in the complex plane is treated in \cite{bdf}, the conclusion being that the resulting automorphisms of $\check{S}(D)\cong \bS^1$ are exactly the orientation-preserving homeomorphisms of the circle. 
\item This is generalized in \cite{ys-sph} to unit balls $D=\bB^{2n}$ in $\bC^n$. Once more, the corresponding homeomorphisms of $\check{S}(D)\cong \bS^{2n-1}$ are the orientation-preserving ones (i.e. the degree-1 homeomorphisms). 
\item The pattern recurs in \cite{ys-t2}, where $D=\bB^2\times \bB^2$ is the Cartesian product of two unit disks. Here, the automorphisms of $\check{S}(D)\cong \bS^1\times \bS^1$ induced by those of $\cT(D)$ are the most straightforward imaginable:
  \begin{enumerate}[(a)]
  \item Cartesian products of orientation-preserving of the two $\bS^1$ factors, and
  \item compositions of these with the flip automorphism interchanging the two $\bS^1$ factors.
  \end{enumerate}
\end{itemize}

This latter result in \cite{ys-t2}, in particular, exemplifies a kind of rigidity phenomenon, whereby the automorphism induced on the Shilov boundary \Cref{eq:bdryprod} by one of the Toeplitz algebra simply permutes some mutually isomorphic Cartesian factors. To make the statement precise we need some notation and terminology.

\begin{definition}\label{def:sym}
  Denote by
  \begin{equation}\label{eq:fam}
    \cD:=(D_j,\ j= 1..s)
  \end{equation}
  a(n ordered) family of bounded symmetric domains. Its {\it symmetric group}
  \begin{equation*}
    \cat{sym}_{\cD}\subseteq \cat{sym}_s
  \end{equation*}
  is the permutation group on $1..r$ that permutes those $D_i$ which are mutually isomorphic.

  For a fixed bounded symmetric domain we denote by $\cat{ho}_*(D)$ the group of self-homeomorphisms of its Shilov boundary $\check{S}(D)$ descended from automorphisms of the Toeplitz algebra $\cT(D)$ and by $\cat{ho}_{*s}(D)$ those descended from automorphisms of the {\it stabilized} Toeplitz algebra $\cT(D)\otimes \cK$, where
  \begin{equation*}
    \cK:=\cK(H),\quad \dim H=\aleph_0. 
  \end{equation*}
  For a family \Cref{eq:fam} we similarly define
  \begin{align*}
    \cat{ho}_*(\cD)&:=\cat{ho}_*(D_1)\times \cdots\times \cat{ho}_*(D_s)\\
    \cat{ho}_{*s}(\cD)&:=\cat{ho}_{*s}(D_1)\times \cdots\times \cat{ho}_{*s}(D_s),
  \end{align*}
  and adopt the same conventions for full groups of self-homeomorphisms of Shilov boundaries, denoted simply by $\cat{ho}_{sh}$:
  \begin{equation*}
    \cat{ho}_{sh}(D) := \text{self-homeomorphisms of the Shilov boundary }\check{S}(D),
  \end{equation*}
  similarly
  \begin{equation*}
    \cat{ho}_{sh}(\cD) := \cat{ho}_{sh}(D_1)\times\cdots\times \cat{ho}_{sh}(D_s),
  \end{equation*}
  etc.
\end{definition}

\begin{remark}
  For a family \Cref{eq:fam} set
  \begin{equation*}
    D:=\prod_j D_j. 
  \end{equation*}
  The symmetric group $\cat{sym}(\cD)$ can be regarded as a subgroup of $\cat{ho}_{sh}(D)$, by permuting the respective Cartesian factors. Conjugation by $\cat{sym}_{\cD}$ in $\cat{ho}_{sh}(D)$ leaves $\cat{ho}_*(\cD)$ invariant and the two subgroups
  \begin{equation*}
    \cat{sym}_{\cD}\text{ and }\cat{ho}_*(\cD) \subseteq \cat{ho}_{sh}(D)
  \end{equation*}
  intersect trivially, so their product in the larger ambient group is isomorphic to the semidirect product
  \begin{equation*}
    \cat{ho}_*(\cD)\rtimes \cat{sym}(\cD)
  \end{equation*}
  associated to the conjugation action. 
\end{remark}

This is sufficient to make sense of the following statement, generalizing the computation in \cite[Theorems 3 and 4]{ys-t2} of the automorphism group of $\cT(\bB^2\times \bB^2)$ in terms of $\mathrm{Aut}~ \cT(\bB^2)$.

\begin{theorem}\label{th:permtens}
    Let $\cD$ be a family of irreducible bounded symmetric domains \Cref{eq:fam} and
  \begin{equation}\label{eq:dprod}
    D:=D_1\times\cdots\times D_s
  \end{equation}
  their product. We then have embeddings
  \begin{equation}\label{eq:dec}
    \cat{ho}_*(\cD)\rtimes \cat{sym}_{\cD}\subseteq \cat{ho}_*(D) \subseteq \cat{ho}_{*s}(\cD)\rtimes \cat{sym}_{\cD}. 
  \end{equation}
\end{theorem}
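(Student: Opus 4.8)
The plan is to establish the two inclusions of \Cref{eq:dec} separately. The left-hand one is soft: I would produce the lifts explicitly. An automorphism $\alpha_i$ of $\cT(D_i)$ inducing $\phi_i$ on $\check{S}(D_i)$ tensors up to $\alpha_1\otimes\cdots\otimes\alpha_s$ on $\cT(D)\cong\cT(D_1)\otimes\cdots\otimes\cT(D_s)$, and on the top quotient $\cT(D)/\cI_{r-1}\cong C(\check{S}(D))$ this descends to $\phi_1\times\cdots\times\phi_s$; hence $\cat{ho}_*(\cD)\subseteq\cat{ho}_*(D)$. A permutation $\sigma\in\cat{sym}_{\cD}$ lifts by permuting tensor factors through fixed isomorphisms $\cT(D_i)\cong\cT(D_{\sigma(i)})$ coming from $D_i\cong D_{\sigma(i)}$, and induces the corresponding factor permutation on $\check{S}(D)$. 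Since $\cat{ho}_*(D)$ is a subgroup of $\cat{ho}_{sh}(D)$ and the semidirect-product identification was already made in \Cref{def:sym} and the remark following it, the product of these two families of lifts gives $\cat{ho}_*(\cD)\rtimes\cat{sym}_{\cD}\subseteq\cat{ho}_*(D)$.

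The right inclusion carries the real content. As in \Cref{pr:char} I would ignore the outer $\cK$ and work with $\alpha$ on $\cT:=\cT(D)$. By \Cref{pr:char} the self-homeomorphism of $\widehat{\cT}$ induced by $\alpha$ preserves the total-rank filtration \Cref{eq:filt}, hence each stratum $T_k(D)\cong\widehat{\cI_k/\cI_{k-1}}$. Because the factors are irreducible, each $T_{k_j}(D_j)$ is connected (\cite[Corollary 5.12]{loos}), so the connected components of $T_k(D)$ are exactly the multi-rank pieces $\prod_j T_{k_j}(D_j)$ with $\sum_j k_j=k$, and $\alpha$, being a homeomorphism, permutes these. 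Reading off the permutation on the $s$ rank-one strata $T_1(D_\ell)\times\{0\}^{s-1}$ (equivalently, on the $s$ components of $T_{r-1}(D)$) defines a permutation $\sigma\in\cat{sym}_s$, and I would propagate it coherently to all multi-ranks by noting that $k_\ell$ is the length of a maximal chain ``in the $\ell$-direction'' and that chain lengths are topological invariants, yielding $\prod_j T_{k_j}(D_j)\mapsto\prod_j T_{k_{\sigma^{-1}(j)}}(D_j)$. Restricting to top rank then shows that $\phi$ carries the $\ell$-th coordinate foliation of $\check{S}(D)=\prod_j\check{S}(D_j)$ to the $\sigma(\ell)$-th, whence $\phi=\sigma_*\circ(\psi_1\times\cdots\times\psi_s)$ for homeomorphisms $\psi_i\colon\check{S}(D_i)\to\check{S}(D_{\sigma(i)})$.

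To finish I would introduce, for each $i$, the ideal $\cL_i:=\cT(D_i)\otimes\bigotimes_{j\ne i}\cK(H_j)\subseteq\cT$, a tensor product of $\cT(D_i)$ with the characteristic ideals $\cI_0(D_j)=\cK(H_j)$ (\Cref{re:i0char}); thus $\cL_i\cong\cT(D_i)\otimes\cK$, the space $\hat{H}_i=\bigotimes_{j\ne i}H_j$ being infinite-dimensional. Its spectrum $\widehat{\cL_i}=\widehat{\cT(D_i)}\times\{0\}^{s-1}$ is the union of the multi-rank strata that are rank-$0$ in every factor but the $i$-th, so the coherent permutation above gives $\alpha(\cL_i)=\cL_{\sigma(i)}$. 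This is a stable isomorphism $\cT(D_i)\sim\cT(D_{\sigma(i)})$, so \Cref{th:stabiso} forces $D_i\cong D_{\sigma(i)}$ and hence $\sigma\in\cat{sym}_{\cD}$; composing $\alpha$ with a lift of $\sigma^{-1}$ we may assume $\sigma=\mathrm{id}$, and then $\alpha|_{\cL_i}$ is an automorphism of $\cT(D_i)\otimes\cK$ whose induced homeomorphism of $\check{S}(D_i)$ I would identify with $\psi_i$ using that the stratum $T_{r_i}(D_i)\times\{0\}^{s-1}$ specializes, the other factors being generic, onto all of $\check{S}(D)$. This places each $\psi_i$ in $\cat{ho}_{*s}(D_i)$ and completes the right inclusion.

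The main obstacle I anticipate is the rigidification in the second paragraph: turning the componentwise bookkeeping into the conclusion that $\phi$ is literally a permuted product. The delicate points are the coherent propagation of $\sigma$ from the rank-one strata to all multi-rank strata, the topological lemma that a self-homeomorphism of a product carrying each coordinate foliation to another is a permuted product of factor homeomorphisms, and the reconciliation of the two a priori different descriptions of $\psi_i$—from the top stratum and from $\alpha|_{\cL_i}$—via specialization. The appearance of $\cat{ho}_{*s}$ rather than $\cat{ho}_*$ is then forced: the natural $\alpha$-invariant ideal isolating the $i$-th factor is $\cT(D_i)\otimes\cK$ with $\cK$ genuinely infinite-dimensional, and there is no evident way to descend to an honest automorphism of $\cT(D_i)$ itself.
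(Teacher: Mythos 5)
Your proposal is correct and follows essentially the same route as the paper's proof: your ideals $\cL_i$ are precisely the paper's $\bullet_i$, your component/chain-length bookkeeping is the paper's \Cref{le:pressum,le:prestall}, the stable-isomorphism step invoking \Cref{th:stabiso} is \Cref{cor:presfull} plus Step~2, and your final specialization argument (generic points of the other factors densely specializing onto $\check{S}(D)$) is exactly how the paper identifies $\alpha|_{\check{S}(D)}$ as a product of the $\psi_i$. The ``coordinate foliation'' reformulation in your second paragraph is a harmless redundancy, since your third paragraph already closes the argument the paper's way.
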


Since the filtration \Cref{eq:filt} of $\cT$ is obtained by tensoring those of $\cT_i$, we introduce more notation for the bookkeeping of the resulting ideals. Specifically, for a tuple
\begin{equation*}
  {\bf i} = (i_1,\cdots,i_s),\quad i_j= 0..r_j
\end{equation*}
we write
\begin{equation*}
  \cI_{\bf i} = \cI_{\bf i}(D):=\cI_{i_1}(D_1)\otimes\cdots\otimes \cI_{i_s}(D_s),
\end{equation*}
occasionally expanding the tuple as a subscript; e.g. $\cI_{i_1,\cdots,i_s}$. This is an ideal of $\cT$ and solvable of length
\begin{equation*}
  |{\bf i}|:=\sum_{j=1}^s i_j
\end{equation*}
as a $C^*$-algebra, and the length-$k$ ideal $\cI_k(D)$ of $\cT$ decomposes as 
\begin{equation}\label{eq:dekik}
  \cI_k(D) = \sum_{|{\bf i}|=k}\cI_{\bf i}(D). 
\end{equation}
Note that it is enough to take a sum (rather than a closed span), since the plain algebraic sum is already closed by \Cref{le:sumclsd}. We refer to $|{\bf i}|$ as the {\it weight} of the tuple (since `length' might be confused with the length $s$, i.e. the number of entries of ${\bf i}$).

\begin{lemma}\label{le:pressum}
  An automorphism of $\cT:=\cT(D)$ permutes the summands in the decomposition \Cref{eq:dekik} for every $k=1..r$. 
\end{lemma}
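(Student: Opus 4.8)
The plan is to transport the statement into the topology of the spectrum $\widehat{\cT}$ and exploit the bijection $\cI\mapsto\widehat{\cI}$ between $C^*$-ideals and open subsets. By \Cref{pr:char} the ideals $\cI_k:=\cI_k(D)$ and $\cI_{k-1}$ are characteristic, so $\alpha$ preserves them and restricts to a self-homeomorphism of the locally closed stratum $\Sigma_k:=\widehat{\cI_k}\setminus\widehat{\cI_{k-1}}$. Since the filtration of $\cT$ is the tensor product of the filtrations of the $\cT(D_j)$, the spectrum $\widehat{\cT}$ is the product $\prod_j\widehat{\cT(D_j)}$ with specialization taken coordinatewise. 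Assigning to each factor the ``level'' recorded by the chain characterization of \Cref{pr:char} (a point of $\widehat{\cT(D_j)}$ sits at level $i_j$ precisely when it is the target of a maximal chain of length $i_j$ in the sense of \Cref{def:chain}), every $x\in\widehat{\cT}$ acquires a multilevel ${\bf i}(x)=(i_1,\dots,i_s)$, and one checks $\widehat{\cI_{\bf i}}=\{x:{\bf i}(x)\le{\bf i}\text{ coordinatewise}\}$, while $\Sigma_k=\coprod_{|{\bf i}|=k}S_{\bf i}$ with $S_{\bf i}:=\{x:{\bf i}(x)={\bf i}\}\cong\prod_j T_{i_j}(D_j)$.

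The first key point is that each $S_{\bf i}$ is clopen in $\Sigma_k$: its closure in $\widehat{\cT}$ consists of specializations of its points, which can only raise the multilevel coordinatewise and so push the weight past $k$ unless the multilevel is unchanged, whence $\overline{S_{\bf i}}\cap\Sigma_k=S_{\bf i}$; since the finitely many weight-$k$ strata partition $\Sigma_k$, each is open as well. The second key point --- and this is where irreducibility of the factors $D_j$ is indispensable --- is that each $S_{\bf i}$ is connected, being a product $\prod_j T_{i_j}(D_j)$ of the connected tripotent manifolds $T_{i_j}(D_j)$ (\cite[Corollary 5.12]{loos}). Consequently the $S_{\bf i}$ with $|{\bf i}|=k$ are exactly the connected components of $\Sigma_k$, and the homeomorphism induced by $\alpha$ permutes them: $\alpha(S_{\bf i})=S_{\sigma({\bf i})}$ for some permutation $\sigma$ of the weight-$k$ multilevels.

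It then remains to upgrade this permutation of strata to a permutation of the ideals $\cI_{\bf i}$ themselves. For this I would observe that $\widehat{\cI_{\bf i}}$ is the smallest open subset of $\widehat{\cT}$ containing $S_{\bf i}$: open sets are stable under generization, and every point of $\widehat{\cI_{\bf i}}=\{{\bf i}(x)\le{\bf i}\}$ is a generization of a point of $S_{\bf i}$, because in each coordinate a level-$m$ point with $m\le i_j$ generizes a level-$i_j$ point, using the closure relations $\overline{T_m}=\bigcup_{m'\ge m}T_{m'}$ that again fall out of the chain description. Applying the homeomorphism $\alpha$ to this intrinsic characterization gives $\alpha(\widehat{\cI_{\bf i}})=\widehat{\cI_{\sigma({\bf i})}}$, hence $\alpha(\cI_{\bf i})=\cI_{\sigma({\bf i})}$, which is exactly the asserted permutation of the summands in \Cref{eq:dekik}.

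The main obstacle is precisely this last passage from the subquotient back to $\cT$: since $\alpha$ need not respect the tensor decomposition \Cref{eq:tdtens}, it can only be controlled through manifestly topological invariants, and the combinatorial heart of the argument is the clopen-plus-connected analysis of $\Sigma_k$, which forces $\alpha$ to permute whole strata rather than shuffle pieces across distinct multilevels. It is here that irreducibility of the $D_j$ enters essentially --- for reducible factors the strata $S_{\bf i}$ could disconnect and the conclusion would genuinely fail.
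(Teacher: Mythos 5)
Your proof is correct and follows essentially the same route as the paper's: both rest on \Cref{pr:char}'s chain characterization and on identifying the weight-$k$ strata $S_{\bf i}\cong\widehat{\cI_{\bf i}/\cI_{\bf i}\cap \cI_{k-1}}$ as the connected components of $\widehat{\cI_k/\cI_{k-1}}$ (connectedness of the tripotent manifolds $T_{i_j}(D_j)$ for \emph{irreducible} factors being the crucial input, exactly as you flag), so that the homeomorphism induced by $\alpha$ must permute them. The only divergences are cosmetic --- you replace the paper's terse induction on $k$ with the cleaner ``smallest open set containing $S_{\bf i}$'' characterization of $\widehat{\cI_{\bf i}}$, and your phrase ``its closure consists of specializations of its points'' is loose as stated (closures of sets need not be unions of point-closures), but the clopenness it is meant to deliver follows at once from your own identification $S_{{\bf i}'}=\Sigma_k\cap\widehat{\cI_{{\bf i}'}}$, which exhibits $\Sigma_k$ as a finite partition into open sets.
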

\begin{proof}
  We know from \Cref{pr:char} that an automorphism $\alpha$ preserves all $\cI_k:=\cI_k(D)$. Recall that argument: the points $x\in \widehat{\cI_k}\subseteq \widehat{\cT}$ are exactly the terminus points of maximal chains
  \begin{equation*}
    x_0\to \cdots\to x_{k'}=x\text{ for some }k'\le k. 
  \end{equation*}
  The conclusion now follows by induction on $k$, noting that for $|{\bf i}|=k$ the subsets
  \begin{equation*}
    \widehat{\cI_{\bf i}/\cI_{\bf i}\cap \cI_{k-1}}\subset \widehat{\cI_k/\cI_{k-1}}
  \end{equation*}
  are precisely the connected components of $\widehat{\cI_k/\cI_{k-1}}$, and hence are permuted by $\alpha$ (or rather by the action it induces on $\widehat{\cT}$).
\end{proof}

Of particular importance will be the tuples of respective weight $r_j$, $j= 1..s$, with all of that weight concentrated in position $j$; we thus need shorthand notation for those:
\begin{equation*}
\bullet_j:=(0,\cdots,0,r_j,0,\cdots,0),\quad j= 1..s
\end{equation*}
where the $r_j$ entry is the $j^{th}$. More generally, we write $\bullet_j^k$ for the tuple whose
\begin{itemize}
\item $j^{th}$ entry is $k$
\item all other entries vanish.
\end{itemize}
In particular, we have $\bullet_j=\bullet_j^{r_j}$.

\begin{lemma}\label{le:prestall}
  For each
  \begin{equation*}
    0\le k\le \max~r_i
  \end{equation*}
  an automorphism $\alpha$ of $\cT:=\cT(D)$ permutes the ideals $\cI_{\bullet_j^k}:=\cI_{\bullet_j^k}(D)$ for $j$ ranging over $1..s$.
\end{lemma}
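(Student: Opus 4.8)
The plan is to upgrade \Cref{le:pressum}. That result already tells us that $\alpha$ permutes the connected components $\widehat{\cI_{\bf i}/(\cI_{\bf i}\cap\cI_{k-1})}$, $|{\bf i}|=k$, of the stratum $\widehat{\cI_k/\cI_{k-1}}\subseteq\widehat{\cT}$, so it remains only to single out, among all weight-$k$ tuples ${\bf i}$, those of the form $\bullet_j^k$ by a spectral property that $\alpha$ must respect. The distinguishing feature I will use is how each stratum sits, via the specialization order of \Cref{def:chain}, on top of the stratum one weight below.

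First I would record the incidence relation between consecutive strata. Writing the spectrum of the tensor product as the product $\widehat{\cT}\cong\prod_j\widehat{\cT(D_j)}$, recalling that within a single factor the strata $T_{l}(D_j)=\widehat{\cI_l(D_j)/\cI_{l-1}(D_j)}$ carry the Hausdorff topology of the tripotent manifolds (so there are no nontrivial specializations inside one stratum), and recalling that consecutive strata are incident (every level-$l$ point is a specialization of a level-$(l-1)$ point, as witnessed by the predecessor in a maximal chain in the sense of \Cref{pr:char}), a direct bookkeeping on levels shows the following. A weight-$(k-1)$ component $C_{{\bf i}'}$ contains a point generizing some point of the weight-$k$ component $C_{{\bf i}}$ if and only if ${\bf i}'$ is obtained from ${\bf i}$ by lowering a single nonzero entry by one. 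In particular the number of weight-$(k-1)$ components incident to $C_{{\bf i}}$ in this sense equals the number of nonzero entries of ${\bf i}$.

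Since $\alpha$ is a homeomorphism of $\widehat{\cT}$ that preserves every ideal $\cI_k$ (hence each stratum and, by \Cref{le:pressum}, permutes its components) and preserves closures, it preserves this incidence relation between the weight-$(k-1)$ and weight-$k$ components. Consequently the property ``$C_{{\bf i}}$ is incident to exactly one weight-$(k-1)$ component'' is $\alpha$-invariant; by the count above it holds precisely when ${\bf i}$ has a single nonzero entry, which (the weight being $k$) forces ${\bf i}=\bullet_j^k$ for some $j$ with $r_j\ge k$. Thus $\alpha$ permutes the components $\bullet_j^k$, equivalently the ideals $\cI_{\bullet_j^k}$. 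The cases $k=0,1$ are degenerate and carry no content: for $k=0$ all $\bullet_j^0$ coincide with the single ideal $\cK(H)=\cI_{\bf 0}$, while for $k=1$ every weight-$1$ tuple is already of the form $\bullet_j^1$, so the substance is in $k\ge 2$.

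The step I expect to require the most care is the incidence computation, i.e.\ both directions of the equivalence ``$C_{{\bf i}'}$ is incident to $C_{{\bf i}}$ iff ${\bf i}'$ lowers one entry of ${\bf i}$''. The forward direction uses that a total weight-drop of one across a product of spectra must be concentrated in a single factor, together with the triviality of same-level specializations inside a Hausdorff stratum; the reverse direction uses the incidence of consecutive strata within the relevant factor, which I would extract from the maximal-chain description in the proof of \Cref{pr:char}. Once this incidence dictionary is in place, the combinatorial characterization of the $\bullet_j^k$ and its $\alpha$-invariance are immediate.
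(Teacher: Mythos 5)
Your argument is correct, and it takes a genuinely different route from the paper's. The paper proceeds by induction on $k$: after the base cases $k=0,1$ (handled exactly as in your last paragraph), it characterizes the points of $\widehat{\bullet_j^k/\bullet_j^{k-1}}$ as the termini of maximal chains $x_0\to\cdots\to x_k$ all of whose earlier members lie in the lower layers $\widehat{\bullet_j^{\ell}/\bullet_j^{\ell-1}}$, $\ell=0..k-1$, so invariance at level $k$ is inherited from invariance at all the levels below. You instead isolate the axis components at each level by a purely local invariant: writing $C_{\bf i}$ for the weight-$|{\bf i}|$ connected components furnished by \Cref{le:pressum}, the number of weight-$(k-1)$ components containing generizations of points of $C_{\bf i}$ equals the number of nonzero entries of ${\bf i}$, so the $C_{\bullet_j^k}$ are exactly the weight-$k$ components with a unique lower neighbour. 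Your incidence dictionary checks out: generization cannot raise the level in any tensor factor (the sets $\widehat{\cI_l(D_j)}$ are open), so a total weight drop of one is concentrated in a single factor, and conversely the predecessor in a maximal chain within that factor realizes each one-entry lowering. What this buys is an intrinsic, single-level characterization of the axis components, with no induction anchored at the bottom of the filtration; the paper's chain characterization is global in the levels but keeps the index $j$ fixed along the whole chain, which is what makes its bookkeeping of the tower automatic.

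One step in your write-up is asserted rather than proved: ``permutes the components $\bullet_j^k$, equivalently the ideals $\cI_{\bullet_j^k}$.'' These are not equivalent on the nose, since the ideal corresponds to the open set $\widehat{\cI_{\bullet_j^k}}=C_{\bullet_j^0}\cup C_{\bullet_j^1}\cup\cdots\cup C_{\bullet_j^k}$, and knowing that the top layers $C_{\bullet_j^k}$ are permuted does not by itself rule out $\alpha$ acting by \emph{different} permutations of the index $j$ at different levels $l\le k$, which would scramble these open sets. Fortunately your own incidence count closes the gap: for $k\ge 2$ the unique weight-$(k-1)$ component below $C_{\bullet_j^k}$ is $C_{\bullet_j^{k-1}}$, so $\alpha(C_{\bullet_j^k})=C_{\bullet_{j'}^k}$ forces $\alpha(C_{\bullet_j^{k-1}})=C_{\bullet_{j'}^{k-1}}$, and downward induction propagates the same $j'$ through all lower levels (level $0$ being the single component $\widehat{\cI_0}$). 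Add that sentence and the proof is complete.
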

\begin{proof}
  We argue by induction on $k$.

  {\bf Base case: $k=0$.} For all $j=1..s$ the ideal $\bullet_j^0$ coincides with $\cI_0(D)$, which is characteristic by \Cref{pr:char}. 

  {\bf Base case: $k=1$.} The sets
  \begin{equation*}
    \widehat{\bullet_j^1/\cI_0(D)}\subset \widehat{\cT},\ j=1..s
  \end{equation*}
  are precisely the connected components of $\widehat{\cI_1(D)/\cI_0(D)}$, and hence are permuted by $\alpha$.
  
  {\bf Inductive step.} The points 
  \begin{equation}\label{eq:layers}
    x\in \widehat{\bullet_j^k/\bullet_j^{k-1}}\subset \widehat{\cT}
  \end{equation}
  are those fitting into a maximal chain
  \begin{equation*}
    x_0\to \cdots\to x_k=x
  \end{equation*}
  with
  \begin{equation*}
    x_{\ell}\in \widehat{\bullet_j^{\ell}/\bullet_j^{\ell-1}},\ \forall \ell=0..k-1. 
  \end{equation*}
  By the inductive hypothesis, this characterization shows that the sets \Cref{eq:layers} are permuted by $\alpha$.
\end{proof}

In particular, we obtain

\begin{corollary}\label{cor:presfull}
  An automorphism of $\cT(D)$ permutes the ideals $\bullet_j$.
\end{corollary}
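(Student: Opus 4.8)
The plan is to extract from \Cref{le:prestall} not merely that $\alpha$ permutes the single-factor layers within each fixed level, but that it respects how those layers chain together as the level increases, and then to read off the corollary from the resulting combinatorial picture. Write $L_{i,k}:=\widehat{\cI_{\bullet_i^k}/\cI_{\bullet_i^{k-1}}}\subseteq\widehat{\cT}$ for $1\le k\le r_i$. By \Cref{le:prestall} the automorphism $\alpha$ permutes the collection $\{L_{i,k}\}$, and by \Cref{pr:char} it preserves the ambient filtration $\cI_k(D)$, hence preserves the ``level'' $k$ (the weight) of a point, since that level is recorded by which difference $\widehat{\cI_k(D)}\setminus\widehat{\cI_{k-1}(D)}$ the point lies in.

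Next I would put a successor relation on these layers, declaring $L_{i,k}$ to precede $L_{i',k'}$ when $k'=k+1$ and some point of $L_{i',k'}$ is a specialization (in the sense of \Cref{def:spec}) of a point of $L_{i,k}$. Being a self-homeomorphism of $\widehat{\cT}$ that permutes the layers and preserves levels, $\alpha$ preserves this relation. The decisive computation is to identify the relation explicitly via the product description $\widehat{\cT}\cong\prod_j\widehat{\cT(D_j)}$: a point of $L_{i,k}$ has its $i$-th coordinate at level $k$ and every other coordinate at the generic (level-$0$) point of the corresponding $\widehat{\cT(D_j)}$. A single specialization step advances exactly one coordinate by one level; advancing a coordinate other than $i$ yields a point supported on two factors, hence outside every single-factor layer $L_{i',k+1}$, while a level-$k$ coordinate in factor $i$ cannot generize back down to level $0$. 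Thus among the layers $L_{\bullet,\bullet}$ the only successor of $L_{i,k}$ is $L_{i,k+1}$, so these layers assemble into $s$ disjoint directed paths $L_{i,1}\to\cdots\to L_{i,r_i}$, one per irreducible factor, of lengths $r_1,\dots,r_s$.

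Finally I would invoke that $\alpha$ induces an automorphism of this disjoint union of directed paths. Such an automorphism carries each path onto a path of the same length and matches their terminal vertices, namely the layers $L_{i,r_i}=\widehat{\cI_{\bullet_i}/\cI_{\bullet_i^{r_i-1}}}$, which are exactly the ones possessing no successor. Hence for each $j$ there is an $i$ with $r_i=r_j$ and $\alpha(L_{j,k})=L_{i,k}$ for all $k$. Since $\alpha$ also fixes the global generic point (the one-point spectrum of the characteristic ideal $\cI_0(D)$ from \Cref{pr:char}), and $\widehat{\cI_{\bullet_j}}$ is precisely that generic point together with $L_{j,1},\dots,L_{j,r_j}$, I conclude $\alpha(\widehat{\cI_{\bullet_j}})=\widehat{\cI_{\bullet_i}}$, i.e.\ $\alpha(\cI_{\bullet_j})=\cI_{\bullet_i}$.

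I expect the main obstacle to be the explicit determination of the successor relation, and specifically ruling out that the top layer $L_{j,r_j}$ of one factor could be sent to a strictly intermediate layer $L_{i,r_j}$ of a higher-rank factor (an $i$ with $r_i>r_j$). This is exactly where one must use that intermediate layers have a successor whereas maximal ones do not, so that the varying exponents $r_j$ hidden in the notation $\bullet_j=\bullet_j^{r_j}$ are matched correctly; without this observation \Cref{le:prestall} alone, applied at the single level $k=r_j$, would only place $\alpha(\cI_{\bullet_j})$ among the $\cI_{\bullet_i^{r_j}}$ with $r_i\ge r_j$.
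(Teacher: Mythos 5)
Your proposal is correct, and at bottom it travels the same road as the paper: everything is extracted from \Cref{le:prestall} together with the specialization combinatorics of $\widehat{\cT}$ already exploited in \Cref{pr:char}. The paper's own proof of \Cref{cor:presfull} is a two-line deduction: since $\bullet_j=\bullet_j^{r_j}\cong\cT(D_j)\otimes\cK$, it invokes \Cref{le:prestall} and simply asserts that an automorphism permutes those ideals ``with equal lengths $r_j$''. The worry you isolate in your last paragraph --- that applying \Cref{le:prestall} at the single level $k=r_j$ only places $\alpha(\cI_{\bullet_j})$ among the $\cI_{\bullet_i^{r_j}}$ with $r_i\ge r_j$, so that a priori a full factor-ideal could land on a strictly intermediate ideal of a higher-rank factor (and note that solvable length cannot separate these, both $\bullet_j$ and $\cI_{\bullet_i^{r_j}}$ having length $r_j$) --- is real as a reading of that two-line deduction, and your successor-digraph argument supplies exactly the missing cross-level consistency: level-preservation plus uniqueness of successors forces $\alpha(L_{j,k})=L_{\sigma(j),k}$ with $\sigma$ independent of $k$, and matching the terminal (successor-less) vertices forces $r_{\sigma(j)}=r_j$. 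Two small points worth recording if you write this up: first, the \emph{existence} of the edges $L_{i,k}\to L_{i,k+1}$ (not merely the exclusion of all other candidate edges) deserves a word; it follows from the maximal-chain characterization of $\widehat{\cI_k/\cI_{k-1}}$ cited in the proof of \Cref{pr:char}, because every generization of a point of $L_{i,k+1}$ has all coordinates $j\ne i$ at the generic point, so any maximal chain ending there passes through the layers $L_{i,\ell}$, $\ell\le k$. Second, that same observation shows the inductive step in the paper's proof of \Cref{le:prestall} already produces level-consistent permutations (a chain ending in a factor-$i$ layer can only pass through factor-$i$ layers), so the rank-matching you establish is implicitly contained there even though the published proof of \Cref{cor:presfull} does not re-invoke it; your write-up makes that hidden step explicit, which is a genuine improvement in rigor rather than a different method.
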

\begin{proof}
  Indeed, these are by definition
  \begin{equation*}
    \bullet_j=\bullet_j^{r_j}\cong \cT(D_j)\otimes K
  \end{equation*}
  for elementary $C^*$-algebras $\cK$ (depending on $j$). By \Cref{le:prestall} an automorphism will permute those with equal lengths $r_j$.  
\end{proof}

\pf{th:permtens}
\begin{th:permtens}
  We prove the two inclusions separately.
  
  {\bf Step 1: We have an embedding}
    \begin{equation*}
      \cat{ho}_*(\cD)\rtimes \cat{sym}_{\cD}\subseteq \cat{ho}_*(D).
    \end{equation*}
    The two semidirect factors embed as follows:
  \begin{itemize}
  \item An element of $\cat{sym}(\cD)$ lifts to an automorphism of the Toeplitz algebra
    \begin{equation*}
      \cT(D)\cong \cT(D_1)\otimes \cdots\otimes \cT(D_s)
    \end{equation*}
    by permuting the respective tensorands, and
  \item An element of
    \begin{equation*}
      \cat{ho}_*(\cD)=\prod_{i=1}^r \cat{ho}_*(D_j)
    \end{equation*}
    consists by definition of a tuple of automorphisms of $\check{S}(D_j)$ liftable respectively to automorphisms of $\cT(D_j)$. Lifting all $r$ of them produces an automorphism of the tensor product $\cT(D)$ of all $\cT(D_j)$.
  \end{itemize}

  {\bf Step 2: The opposite inclusion}
  \begin{equation*}
    \cat{ho}_*(D)\subseteq \cat{ho}_{*s}(\cD)\rtimes \cat{sym}_{\cD}.
  \end{equation*}  
  Let $\alpha$ be an automorphism of $\cT:=\cT(D)$.  By \Cref{cor:presfull} it permutes the ideals
  \begin{equation}\label{eq:bullj}
    \bullet_j\cong \cT_j\otimes \cK_j
  \end{equation}
  where
  \begin{itemize}
  \item $\cT_j:=\cT(D_j)$, and
  \item $\cK_j$ are elementary $C^*$-algebras. 
  \end{itemize}
  By \Cref{th:stabiso} those $\bullet_j$ that are permuted by $\alpha$ must correspond to isomorphic bounded symmetric domains $D_j$, so the permutation $\sigma_{\alpha}$ induced by $\alpha$ on the $\check{S}(D_j)$ belongs to $\cat{sym}_{\cD}$. Realizing $\sigma_{\alpha}$ as an automorphism group pf $\cT$ which permutes the respective tensorands, we can compose $\alpha$ with the inverse $\sigma_{\alpha}^{-1}$ and henceforth assume without loss of generality that $\alpha$ leaves each $\bullet_j$ invariant.

  Under that assumption $\alpha$ induces individual automorphisms
  \begin{equation*}
    \alpha_j\in \cat{ho}_{*s}(D_j),
  \end{equation*}
  since by construction they lift to automorphisms $\alpha|_{\bullet_j}$ of the ideals \Cref{eq:bullj}. Proceeding with the proof, we focus on $\alpha_1$ to fix ideas. The ideal
  \begin{equation}\label{eq:1ideal}
    \bullet_1=\cT_1\otimes \cI_0(D_2)\otimes \cdots \otimes \cI_0(D_s)\subset \cT
  \end{equation}
  is isomorphic to $\cT_1\otimes \cK$, so its spectrum can be identified with
  \begin{equation*}
    \widehat{\cT_1}\cong \{pt\}\sqcup \check{S}(D_1). 
  \end{equation*}
  $\alpha$ leaves \Cref{eq:1ideal} invariant, and $\alpha_i$ can be identified with the restriction of $\alpha$ to the locally closed subset
  \begin{equation}\label{eq:incd1}
    \check{S}(D_1)\subset \widehat{\cT_1}\subset \widehat{\cT}. 
  \end{equation}
  Writing
  \begin{equation*}
    D_1':=D_2\times \cdots\times D_s
  \end{equation*}
  and denoting by $pt_1'$ the open point of the spectrum of $\cT(D_1')$, the inclusion \Cref{eq:incd1} identifies $\check{S}(D_1)$ with
  \begin{equation}\label{eq:d1pt}
    \check{S}(D_1)\times \{pt_1'\}\subset \prod_{j=1}^s\widehat{\cT_j}, 
  \end{equation}
  while the Shilov boundary $\check{S}(D)$ decomposes as
  \begin{equation}\label{eq:d1d1'}
    \check{S}(D) = \check{S}(D_1)\times \check{S}(D_1'). 
  \end{equation}
  Since $\alpha$ restricts to $\alpha_1$ on \Cref{eq:d1pt} and $\{pt_1'\}$ is dense in $\widehat{\cT(D_1')}$, the decomposition \Cref{eq:d1d1'} means that $\alpha$ restricts to $\check{S}(D)$ as
  \begin{equation*}
    \alpha_1\times (\text{some homeomorphism on }\check{S}(D_1')). 
\end{equation*}
  Applying this argument to the other Cartesian factors of $D$, we conclude that
  \begin{equation*}
    \alpha|_{\check{S}(D)} = \alpha_1\times\cdots\alpha_r.
  \end{equation*}
  This finishes the proof of the theorem.
\end{th:permtens}

We will be able to say more when all $D_j$ are of rank 1. First, we have

\begin{lemma}\label{le:1easy}
  For a rank-1 bounded symmetric domain $D$ we have
  \begin{equation*}
    \cat{ho}_*(D) = \cat{ho}_{*s}(D),
  \end{equation*}
  i.e. the automorphisms induced on $\check{S}(D)$ by those of $\cT:=\cT(D)$ and those of $\cT\otimes \cK$ coincide. 
\end{lemma}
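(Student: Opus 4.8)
The plan is to reduce both $\cat{ho}_*(D)$ and $\cat{ho}_{*s}(D)$ to a single liftability question for the Toeplitz extension, and then to observe that stabilization leaves the relevant obstruction unchanged. The inclusion $\cat{ho}_*(D)\subseteq\cat{ho}_{*s}(D)$ is immediate: an automorphism $\alpha$ of $\cT:=\cT(D)$ stabilizes to $\alpha\otimes\id_{\cK}$ on $\cT\otimes\cK$, and under the canonical identification $\widehat{\cT\otimes\cK}\cong\widehat{\cT}$ the two induce the same self-homeomorphism of $\check{S}(D)$. All the content is in the reverse inclusion.

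First I would record the rank-$1$ structure. Since $D$ is an irreducible rank-$1$ domain it is a ball $\bB^{2p}$ with $\check{S}(D)\cong\bS^{2p-1}$, and the filtration \Cref{eq:filt} collapses to the essential extension $0\to\cK(H)\to\cT\to C(\check{S}(D))\to 0$, the top subquotient $\cI_1/\cI_0$ being the commutative algebra $C(\check{S}(D))$ (equivalently, $H_1$ is one-dimensional). Stabilizing and using $\cK(H)\otimes\cK\cong\cK$ yields a parallel essential extension $0\to\cK\to\cT\otimes\cK\to C(\check{S}(D))\otimes\cK\to 0$ whose kernel is characteristic by \Cref{pr:char}. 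Consequently any automorphism $\beta$ of $\cT\otimes\cK$ preserves the kernel, hence extends to $\mathrm{Ad}(U)$ on the multiplier algebra $\cB(H\otimes\ell^2)$ for some unitary $U$, and descends to the quotient as $\phi^*\otimes\id$, where $\phi$ is the self-homeomorphism of $\check{S}(D)$ induced by $\beta$; the analogous description holds verbatim for $\cT$ itself.

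The crux is the resulting reformulation. Writing $\tau\colon C(\check{S}(D))\to\cQ(H)$ for the Busby invariant of the Toeplitz extension and $\pi$ for the Calkin quotient map, the analysis above shows that $\phi\in\cat{ho}_*(D)$ if and only if there is a unitary $u\in\cB(H)$ with $\mathrm{Ad}(\pi(u))\circ\tau=\tau\circ\phi^*$, that is, $\tau$ and $\tau\circ\phi^*$ are strongly equivalent (conjugate by an index-zero unitary of $\cQ(H)$). By Voiculescu absorption this strong equivalence amounts to the equality $[\tau]=[\tau\circ\phi^*]$ in the group $\mathrm{Ext}(C(\check{S}(D)))$, i.e. to $\phi^*$ fixing the class $[\tau]$. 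The same reasoning applied to the stabilized extension shows that $\phi\in\cat{ho}_{*s}(D)$ if and only if $\phi^*\otimes\id$ fixes the class $[\tau_s]\in\mathrm{Ext}(C(\check{S}(D))\otimes\cK)$ of the stabilized Toeplitz extension.

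Finally I would invoke naturality: the stabilization isomorphism $\mathrm{Ext}(C(\check{S}(D)))\xrightarrow{\ \sim\ }\mathrm{Ext}(C(\check{S}(D))\otimes\cK)$ sends $[\tau]$ to $[\tau_s]$ and intertwines the action of $\phi^*$ with that of $\phi^*\otimes\id$, so $\phi^*$ fixes $[\tau]$ exactly when $\phi^*\otimes\id$ fixes $[\tau_s]$; this gives $\cat{ho}_*(D)=\cat{ho}_{*s}(D)$. Concretely, for $\check{S}(D)\cong\bS^{2p-1}$ one has $\mathrm{Ext}\cong\bZ$ with $[\tau]$ a generator and $\phi^*$ acting by $\deg\phi$, so both conditions read $\deg\phi=1$, recovering the characterization of \cite{bdf,ys-sph}. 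I expect the main obstacle to be the bookkeeping in the crux step: pinning down that liftability corresponds to \emph{strong} (index-zero) equivalence rather than mere unitary equivalence, using Voiculescu absorption to pass to equality of $\mathrm{Ext}$ classes, and verifying that the Busby invariant of the stabilized Toeplitz extension is genuinely the image of $[\tau]$ under stabilization.
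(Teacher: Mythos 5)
Your proposal is correct, and it completes the argument by a genuinely different route than the paper's, although both pivot on the same invariant: the class of the Toeplitz extension in $\mathrm{Ext}(C(\bS^{2p-1}))\cong K_1(\bS^{2p-1})\cong\bZ$. The paper's proof leans on \cite{bdf,ys-sph} twice: it quotes the computation that $\cat{ho}_*(D)$ consists exactly of the degree-$1$ homeomorphisms, shows that an automorphism of $\cT\otimes\cK$ fixes the (nontrivial) class of the stabilized extension \Cref{eq:ext} in $K_1(\bS^{2p-1})$ and hence induces a degree-$1$ homeomorphism, and then invokes the lifting direction of \cite{ys-sph} to land back in $\cat{ho}_*(D)$. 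You instead prove an intrinsic equivalence on \emph{both} sides --- liftability of $\phi$ amounts, via spatiality of automorphisms (any automorphism of an essential extension of the compacts is $\mathrm{Ad}(U)$, since its restriction to the compact ideal is spatial) together with Voiculescu absorption, to $\phi^*$ fixing the Busby class --- and then conclude by naturality of the stabilization isomorphism on $\mathrm{Ext}$. This makes the equality $\cat{ho}_*(D)=\cat{ho}_{*s}(D)$ logically independent of the sphere-specific results of \cite{bdf,ys-sph} (your final paragraph is only a consistency check), and would apply to any unital essential extension of a nice commutative quotient by the compacts; the price is heavier machinery, in particular the strong-versus-weak equivalence bookkeeping you flag and the non-unital form of Voiculescu's theorem needed to make sense of $\mathrm{Ext}(C(\check{S}(D))\otimes\cK)$.

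One step does need a patch. An automorphism $\beta$ of $\cT\otimes\cK$ does \emph{not} in general descend to the quotient as $\phi^*\otimes\id$: it descends as some automorphism $\gamma$ of $C(\check{S}(D))\otimes\cK$ covering $\phi$, and $\gamma$ differs from $\phi^*\otimes\id$ by a $C(\check{S}(D))$-linear automorphism, i.e.\ a continuous map $\check{S}(D)\to \mathrm{Aut}(\cK)\cong PU(\ell^2)$. So a priori your crux equivalence only says that \emph{some} lift $\gamma$ of $\phi$ fixes $[\tau_s]$, not that $\phi^*\otimes\id$ does. The repair is easy here: by Kuiper's theorem $U(\ell^2)$ is contractible, so $PU(\ell^2)$ is a $K(\bZ,2)$ and homotopy classes of such twists are classified by $H^2(\check{S}(D);\bZ)=H^2(\bS^{2p-1};\bZ)=0$; the twist is therefore homotopic to the identity, acts trivially on $\mathrm{Ext}$ by homotopy invariance, and your argument goes through. (The paper's own proof makes the same silent identification when it asserts that $\alpha$ fixes the class in $KK^1(C(\check{S}(D)),\cK)$, so this is a shared elision rather than a defect peculiar to your write-up --- but in your formulation, where the equivalence ``liftable iff the class is fixed'' carries all the weight, it is worth spelling out.)
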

\begin{proof}
  We have to prove the inclusion
  \begin{equation*}
    \cat{ho}_*(D) \supseteq \cat{ho}_{*s}(D),
  \end{equation*}
  the opposite one being obvious. To that end, we fix an automorphism $\alpha$ of $\cT\otimes \cK$.

  $\cT\otimes \cK$ fits into the extension
  \begin{equation}\label{eq:ext}
    0\to \cK'\otimes \cK \to \cT\otimes \cK \to C(\check{S}(D))\otimes \cK \to 0,
  \end{equation}
  where
  \begin{equation*}
    \cK':=\cK(H^2(\check{S}(D)))
  \end{equation*}
  is the algebra of compact operators on the Hardy space associated to $D$.
  
  \Cref{eq:ext} is preserved by $\alpha$, which thus leaves the corresponding element of the K-homology group
  \begin{equation}\label{eq:khom}
    K_1(\check{S}(D))\cong KK^1(C(\check{S}(D))\otimes \cK, \cK'\otimes \cK) \cong KK^1(C(\check{S}(D)), \cK). 
  \end{equation}
  As noted before, the rank-1 bounded symmetric domains are those of type $I_{p,1}$ (\cite[Table 1]{viv}), with respective Shilov boundaries $\bS^{2p-1}$, and \cite{bdf,ys-sph} ensure that their $\cat{ho}_*$ groups are those of degree-1 homeomorphisms. This means that
  \begin{itemize}
  \item the K-homology group \Cref{eq:khom} is isomorphic to $K_1(\bS^{2p-1})\cong \bZ$;
  \item the class of the extension \Cref{eq:ext} is a non-trivial element therein;
  \item the homeomorphism $\alpha$ of $\bS^{2p-1}$ fixes that non-trivial element and hence has degree 1.
  \end{itemize}
  It then follows from the main result of \cite{ys-sph} that $\alpha_i$ is liftable to $\cT(D_i)$, proving the claim.
\end{proof}

\begin{corollary}\label{cor:1eq}
  For bounded symmetric domains \Cref{eq:dprod} decomposing as products of rank-1 domains we have
  \begin{equation*}
    \cat{ho}_*(\cD)\rtimes \cat{sym}_{\cD} = \cat{ho}_*(D)
  \end{equation*}
\end{corollary}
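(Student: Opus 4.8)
The plan is to deduce the corollary from \Cref{th:permtens} by a squeeze argument, collapsing the two ends of the chain of embeddings \Cref{eq:dec} with the help of \Cref{le:1easy}. Recall that for the product \Cref{eq:dprod} of irreducible domains, \Cref{th:permtens} already supplies
\begin{equation*}
  \cat{ho}_*(\cD)\rtimes \cat{sym}_{\cD}\subseteq \cat{ho}_*(D)\subseteq \cat{ho}_{*s}(\cD)\rtimes \cat{sym}_{\cD}.
\end{equation*}
Under the standing hypothesis of the corollary every factor $D_j$ is of rank $1$, so \Cref{le:1easy} applies to each of them individually and yields $\cat{ho}_*(D_j)=\cat{ho}_{*s}(D_j)$.

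First I would assemble these factorwise equalities into a single one. Since by \Cref{def:sym} the groups $\cat{ho}_*(\cD)$ and $\cat{ho}_{*s}(\cD)$ are, by definition, the Cartesian products of their respective factorwise constituents, we obtain
\begin{equation*}
  \cat{ho}_*(\cD)=\prod_{j=1}^s \cat{ho}_*(D_j)=\prod_{j=1}^s \cat{ho}_{*s}(D_j)=\cat{ho}_{*s}(\cD).
\end{equation*}
Forming the semidirect product with the common permutation factor $\cat{sym}_{\cD}$ on each side — the conjugation action being identical, since it is induced by permuting Cartesian factors and does not reference the stabilization — then identifies the leftmost and rightmost terms of the displayed chain of embeddings.

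Consequently $\cat{ho}_*(D)$ is sandwiched between two groups that coincide, forcing all three to be equal; this is precisely the asserted identity $\cat{ho}_*(\cD)\rtimes \cat{sym}_{\cD}=\cat{ho}_*(D)$. I do not expect any genuine obstacle at this stage: all of the substantive work has already been discharged in \Cref{le:1easy} (through the $K$-homology identification \Cref{eq:khom} and the degree-$1$ rigidity of \cite{bdf,ys-sph}), and the present step is purely formal. The only point requiring minor care is to confirm that the two semidirect-product structures agree, which is immediate because the permutation factor $\cat{sym}_{\cD}$ and its conjugation action are the same on both ends of \Cref{eq:dec}.
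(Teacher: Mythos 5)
Your proposal is correct and is exactly the argument the paper intends: its proof of \Cref{cor:1eq} reads ``immediate from \Cref{th:permtens,le:1easy},'' i.e.\ the same squeeze collapsing the chain \Cref{eq:dec} once \Cref{le:1easy} gives $\cat{ho}_*(D_j)=\cat{ho}_{*s}(D_j)$ for each rank-$1$ factor. You have merely spelled out the routine details (factorwise assembly and the agreement of the semidirect-product structures) that the paper leaves implicit.
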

\begin{proof}
This is immediate from \Cref{th:permtens,le:1easy}.   
\end{proof}

\subsection{Reconstructing higher-rank domains}\label{subse:rechigh}

We now revisit the theme broached in \Cref{subse:recirr}, of recovering a domain from its Toeplitz algebra. The material in this section allows us to extend \Cref{th:stabiso} to arbitrary domains.

\begin{theorem}\label{th:stabisogen}
  If $D_1$ and $D_2$ are two bounded symmetric domains such that $\cT(D_i)$ are stably isomorphic then $D_i$ are isomorphic.
\end{theorem}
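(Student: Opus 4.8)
The plan is to bootstrap from the irreducible case \Cref{th:stabiso} by showing that a stable isomorphism
$\alpha:\cT(D_1)\otimes\cK\xrightarrow{\ \sim\ }\cT(D_2)\otimes\cK$
is forced to match up the irreducible factors of the two domains. Write $D_\ell=\prod_i D_{\ell,i}$ ($\ell\in\{1,2\}$) for the essentially-unique decompositions into irreducible factors. Then each stabilized algebra $\cT(D_\ell)\otimes\cK$ carries the tensor-product filtration \Cref{eq:filt} together with its distinguished system of ideals $\bullet_i$, and by (the argument behind) \Cref{cor:presfull} we have $\bullet_i\cong\cT(D_{\ell,i})\otimes\cK$. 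The goal is to produce a rank-preserving bijection of factors realized by stable isomorphisms, and then invoke \Cref{th:stabiso} factor by factor.

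The crux is an observation already implicit in \Cref{pr:char}: every characterization used there and in \Cref{le:prestall,cor:presfull} is purely topological, phrased in terms of maximal chains and connected components of the spectrum, and therefore transports across an arbitrary $*$-isomorphism rather than just an automorphism of a single algebra. (The stabilized versions are available exactly as in \Cref{pr:char}, whose proof ``transports verbatim'' to $\cT\otimes\cK$.) Consequently the homeomorphism $\widehat{\alpha}$ induced on spectra carries the characteristic filtration of $\cT(D_1)\otimes\cK$ onto that of $\cT(D_2)\otimes\cK$; in particular it restricts to a homeomorphism $\widehat{\cI_1(D_1)/\cI_0(D_1)}\cong\widehat{\cI_1(D_2)/\cI_0(D_2)}$.

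Next I would exploit the connected-component count at the bottom layer. As recorded in the base case of \Cref{le:prestall}, the connected components of $\widehat{\cI_1(D_\ell)/\cI_0(D_\ell)}$ are precisely the manifolds $T_1(D_{\ell,i})$, one per irreducible factor (a rank-$1$ tripotent of $V(D_\ell)=\bigoplus_i V(D_{\ell,i})$ is supported in a single summand, and each $T_1(D_{\ell,i})$ is connected). Hence $\widehat{\alpha}$ induces a bijection of these component sets, so $D_1$ and $D_2$ have the same number of irreducible factors and we obtain a permutation $\sigma$ matching them. Running the inductive tower-characterization of \Cref{le:prestall} then upgrades this level-$1$ matching to the full towers: since $\widehat{\alpha}$ preserves maximal chains and the already-matched lower layers, it sends $\widehat{\bullet_i^k/\bullet_i^{k-1}}$ (for $D_1$) to $\widehat{\bullet_{\sigma(i)}^k/\bullet_{\sigma(i)}^{k-1}}$ (for $D_2$) at every height $k$, and at the top identifies $\bullet_i\cong\cT(D_{1,i})\otimes\cK$ with $\bullet_{\sigma(i)}\cong\cT(D_{2,\sigma(i)})\otimes\cK$. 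Restricting $\alpha$ to these ideals yields a stable isomorphism $\cT(D_{1,i})\otimes\cK\cong\cT(D_{2,\sigma(i)})\otimes\cK$ of Toeplitz algebras of irreducible domains, whence $D_{1,i}\cong D_{2,\sigma(i)}$ by \Cref{th:stabiso}. As this holds for every $i$, the multisets of irreducible factors agree, and uniqueness of the irreducible decomposition gives $D_1\cong D_2$.

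I expect the main obstacle to be the bookkeeping rather than a new idea: one must verify carefully that the characterizations of \Cref{le:pressum,le:prestall,cor:presfull} are genuinely intrinsic to the spectrum—so that they survive passage to an isomorphism between two different algebras—and pin down the Jordan-theoretic input that the level-$1$ layer splits into exactly one connected component per irreducible factor. Given those two points, the matching of towers and the reduction to \Cref{th:stabiso} are then essentially the same mechanism already used in the proof of \Cref{th:permtens}.
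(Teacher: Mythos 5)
Your proposal is correct and takes essentially the same route as the paper: both arguments transport the topologically characterized ideals $\bullet_j$ across the stable isomorphism (noting that the chain/connected-component characterizations behind \Cref{pr:char,le:prestall,cor:presfull} apply to isomorphisms between two different algebras, not merely automorphisms of one), deduce that the number of irreducible factors agrees and obtain a matching permutation, and then reduce factor-by-factor to \Cref{th:stabiso}. The details you spell out—one connected component of the bottom layer per irreducible factor, and the inductive tower-matching—are precisely what the paper compresses into ``the argument in the proof of \Cref{cor:presfull} in fact shows.''
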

\begin{proof}
  Decompose
  \begin{equation*}
    D_i\cong D_{i,1}\times\cdots\times D_{i,s_i}
  \end{equation*}
  into irreducible components. The argument in the proof of \Cref{cor:presfull} in fact shows that an isomorphism
  \begin{equation*}
    \cT(D_1)\otimes \cK\cong \cT(D_2)\otimes \cK
  \end{equation*}
  will in fact map the ideals
  \begin{equation}\label{eq:bullj1}
    \bullet_j(D_1)\otimes \cK \subseteq \cT(D_1)\otimes \cK
  \end{equation}
  onto their counterparts
  \begin{equation*}
    \bullet_{j'}(D_2)\otimes \cK \subseteq \cT(D_2)\otimes \cK,
  \end{equation*}
  perhaps in some permuted order; note that this line of reasoning implies $s_1=s_2$, which we did not assume a priori.

  But \Cref{eq:bullj1} are respectively isomorphic to Toeplitz algebras of the form $\cT(D_{1,j})\otimes \cK_j$ for elementary $C^*$-algebras $\cK$, and similarly for $D_2$. \Cref{th:stabiso} then implies that the Cartesian factors $D_{1,j}$ are isomorphic to $D_{2,j}$ (again, possibly upon permuting), hence the conclusion.
\end{proof}

\section{Trivial actions on spectra}\label{se:autotriv}

\begin{theorem}\label{th:trivact}
  For any bounded symmetric domain $D$ and elementary $C^*$-algebra $\cK$, an automorphism of $\cT(D)\otimes \cK$ that acts trivially on $\check{S}(D)$ acts trivially on the entire spectrum
  \begin{equation*}
   \widehat{\cT(D)}\cong \widehat{\cT(D)\otimes \cK}. 
  \end{equation*}
\end{theorem}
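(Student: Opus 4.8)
The plan is to identify the spectrum $\widehat{\cT(D)}$ with the disjoint union $\bigsqcup_{k=0}^r T_k$ of the tripotent manifolds $T_k = T_k(D)$ and to show that the assumed triviality on the top stratum $T_r = \check{S}(D)$ propagates to every stratum via an elementary rigidity property of the specialization order. First I would discard the stabilizing factor: exactly as in the proof of \Cref{pr:char}, tensoring with $\cK$ alters neither the spectrum nor any of the structure used below, so I may work with the self-homeomorphism $\alpha$ that the given automorphism induces on $\widehat{\cT}\cong\widehat{\cT\otimes\cK}$, where $\cT:=\cT(D)$. By hypothesis $\alpha$ fixes $T_r$ pointwise, and by \Cref{pr:char}---specifically the chain characterization of the strata $\widehat{\cI_k/\cI_{k-1}}$ established in its proof---$\alpha$ maps each $T_k$ onto itself.

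Next I would record the relevant piece of the specialization order. Fix $e\in T_k$. As recalled in the proof of \Cref{le:samedim}, $e$ parametrizes an irreducible representation $\rho_e$ whose kernel is a primitive ideal with $\cT/\ker\rho_e\cong\cT(D_e)$, where $D_e=D\cap V_0(e)$ is the subdomain cut out by the Peirce $0$-space of $e$. Since the closure of a point in the Jacobson topology consists of the primitive ideals containing it, this realizes $\overline{\{e\}}$ as $\widehat{\cT(D_e)}$ with $e$ itself as the generic (rank-$0$) point; under the boundary-component description of \cite[Theorem 6.3]{loos} a rank-$j$ tripotent $g\in V_0(e)$ corresponds to the rank-$(k+j)$ tripotent $e+g$. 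Intersecting with the top stratum, I would extract the formula
\begin{equation*}
  \overline{\{e\}}\cap T_r = e+\check{S}(D_e),
\end{equation*}
exhibiting the relevant closed set as an affine translate by $e$ of the Shilov boundary of $D_e$.

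The conclusion would then be purely formal. Because $\check{S}(D_e)=T_{r-k}(D_e)$ is closed under $g\mapsto -g$, the real-affine reflection $v\mapsto 2e-v$ preserves $e+\check{S}(D_e)$, so $e$ is a center of symmetry of the nonempty bounded set $\overline{\{e\}}\cap T_r$. For an arbitrary $e$ the point $\alpha(e)$ lies in the same stratum $T_k$, and since $\alpha$ fixes $T_r$ pointwise,
\begin{equation*}
  \overline{\{\alpha(e)\}}\cap T_r = \alpha\bigl(\overline{\{e\}}\cap T_r\bigr) = \overline{\{e\}}\cap T_r.
\end{equation*}
Thus a single bounded set has both $e$ and $\alpha(e)$ as centers of symmetry; as a nonempty bounded subset of a real vector space has at most one such center (two distinct ones would make it invariant under a nonzero translation), I conclude $\alpha(e)=e$. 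Since $e$ was arbitrary, $\alpha$ is the identity on all of $\widehat{\cT}$.

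The step I expect to demand the most care is the explicit affine identity $\overline{\{e\}}\cap T_r=e+\check{S}(D_e)$. The abstract homeomorphism $\overline{\{e\}}\cong\widehat{\cT(D_e)}$ is formal, but it is the precise shape of the induced embedding of tripotents---that it is the translation $g\mapsto e+g$, so that the symmetry $g\mapsto -g$ of $\check{S}(D_e)$ becomes reflection about $e$---that makes the center-of-symmetry argument run. This is the one place where I must appeal to the Jordan-theoretic geometry of $\partial D$ rather than to $C^*$-algebraic generalities, and I would take care to confirm that the tripotent coordinates on the strata are unaffected by the stabilization $\cT\mapsto\cT\otimes\cK$ invoked at the outset.
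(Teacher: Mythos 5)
Your proof is correct, and it takes a genuinely different route from the paper's. The paper proceeds by downward induction on the stratum rank: assuming $\alpha$ fixes $T_{k'}$ pointwise for all $k'>k$, it notes that $\cK_e=\bigcap_{f>e}\ker\sigma_f$ is the $\sigma_e$-preimage of the compact operators, hence $\alpha$-invariant, and then that $\ker\sigma_e$ is the unique maximal ideal of $\cK_e$, so each rank-$k$ point is fixed in turn. You dispense with the induction altogether: the affine identity $\overline{\{e\}}\cap T_r=e+\check{S}(D_e)$, the $-1$-symmetry of the tripotent variety $\check{S}(D_e)$, and the uniqueness of the center of symmetry of a nonempty bounded subset of $V$ recover $e$ from the trace of its closure on the top stratum alone. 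This buys a formally stronger and purely topological statement: the map $x\mapsto\overline{\{x\}}\cap\check{S}(D)$ is injective on $\widehat{\cT(D)}$, so \emph{any} self-homeomorphism of the spectrum fixing $\check{S}(D)$ pointwise is the identity---indeed you never really need the stratum preservation from \Cref{pr:char}, since $\alpha(e)$ lies in some stratum and the symmetry argument is insensitive to its rank; what the paper's induction buys instead is independence from the Jordan-geometric picture beyond what \cite{up-alg} supplies. One sourcing caveat on the step you rightly flagged as delicate: the identification of $\overline{\{e\}}$ with $\widehat{\cT(D_e)}$ matching the point labelled $e+g$ with the one labelled $g$ is an operator-theoretic compatibility of Upmeier's representations (\cite[Lemma~3.4 and Corollary~3.11]{up-alg} and the surrounding discussion---the same input the paper's own proof invokes), not something \cite[Theorem~6.3]{loos} alone provides, since the latter describes only the boundary geometry of $D$ and not the spectrum of $\cT(D)$; with that citation repaired, the argument is complete.
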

\begin{proof}
  We set $\cK=\bC$ throughout to keep the notation simple; the reasoning applies generally.

  Recall the filtration \Cref{eq:filt} of $\cT:=\cT(D)$, with elements of
  \begin{equation*}
    \widehat{\cI_k/\cI_{k-1}}\subset \widehat{\cT} 
  \end{equation*}
  being classes of irreducible representations $\sigma_e$ parametrized by rank-$k$ tripotents $e$. The automorphism group of $\cT$ acts by homeomorphisms on the manifold $T_k$ of rank-$k$ tripotents for $k=0..r$ (where $r$ is the rank), and by assumption that action is trivial on $T_r$. We will argue by downward induction on $k\le r$ that the action on $T_k$ is trivial, the base case having been taking care of by the hypothesis.

  For the induction step, fix an automorphism $\alpha$ of $\cT$, a $k<r$, and assume the action of $\alpha$ on $T_{k'}$ is trivial for all $k<k'\le r$. The goal is to show that for any rank-$k$ tripotent $e$, the kernel $\cI_e$ of the associated irreducible representation $\sigma_e$ is invariant under $\alpha$. 

  Consider those tripotents $f$ of rank $>k$ which dominate $e$ in the sense that there is a tripotent $e'$, orthogonal to $e$, with $f=e+e'$ (see e.g. \cite[\S 5.1]{loos}); we write $f>e$. It follows from \cite[Corollary 3.11 and discussion preceding Lemma 3.4]{up-alg} that the intersection
  \begin{equation*}
    \cK_e:=\bigcap_{f>e}\ker\sigma_f
  \end{equation*}
  is precisely the preimage of the compact operators through $\sigma_e$, and hence is $\alpha$-invariant because $\ker\sigma_f$ are. But then, since $\cI_e\subset \cK_e$ is the unique maximal ideal, it too must be $\alpha$-invariant. This concludes the (downward) induction step and the proof of the theorem.
\end{proof}

Consider, more generally, a solvable algebra $\cA$ with a filtration \Cref{eq:filt} (naturally, ending in $\cA$ rather than $\cT(D)$). It is not a given that the analogue of \Cref{th:trivact} holds, even when $\cI_k$ are characteristic. In other words, as \Cref{ex:ntriv} confirms, it is possible for automorphisms to be trivial on some of the layers
\begin{equation*}
  \widehat{\cI_k/\cI_{k-1}}\subset \widehat{\cA} 
\end{equation*}
but not others. 

\begin{example}\label{ex:ntriv}
  For some real deformation parameter $\mu$ with $|\mu|<1$ let $\cA$ be the function algebra $C(S_{\mu}U(2))$ on the compact quantum group $S_{\mu}U(2)$ studied in \cite{wor-su2}. As shown in \cite[Appendix A2]{wor-su2} and recalled in \cite[\S 1]{sheu-quant}, it fits into an exact sequence
  \begin{equation*}
    0\to C(\bS^1)\otimes \cK\to \cA\to C(\bS^1)\to 0,
  \end{equation*}
  characteristic because the right hand surjection is precisely the abelianization of $\cA$ ($\cK$ means compact operators on a countable-dimensional Hilbert space). $\cA$ is thus solvable of length 1, with
  \begin{equation*}
    \cI_0 \cong C(\bS^1)\otimes \cK\text{ and }\cI_1/\cI_0\cong C(\bS^1). 
  \end{equation*}

  By \cite[Examples 1.7.4 and 1.7.8]{nt-bk} and the description of the irreducible $\cA$-representations (e.g. \cite[Theorem 3.2]{sv}) there are one-parameter automorphism groups $\sigma_t$ and $\tau_s$ (the {\it modular} and {\it scaling} group respectively, familiar from the theory of compact quantum groups), that act
  \begin{itemize}
  \item $\sigma_t$: trivially on the ``bottom'' layer $\widehat{\cI_0}\cong \bS^1$ but not the ``top'' layer $\widehat{\cI_1/\cI_0}\cong \bS^1$;
  \item $\tau_s$: vice versa, trivially on the top layer but not the bottom one. 
  \end{itemize}
\end{example}

\appendix

\section{Numerical data on bounded symmetric domains}\label{se:a}

We gather some of the bounded-symmetric-domain numerology useful in the main text, presented here in the form that is most easily applicable in the discussion above. The information is compiled mainly from \cite{viv,sat}.

For the list of irreducible domains together with their (real) dimensions and ranks we look to \cite[Table 1]{viv}, choosing our parameter ranges so as to avoid the small-rank coincidences listed in \cite[Table 7]{viv}. With this caveat, we further list the tube and non-tube domains separately, recovering the former from \cite[Table 11]{viv} and the latter by elimination. In the tube case we have


\begin{table}[h]
  \centering
  \begin{tabular}{|c|c|c|}
    \hline
    type & $\dim_{\bR}$ & rank \\ \hline
    $I_{q,q},\ q\ge 1$ & $2q^2$ & $q$ \\ \hline
    $II_{2q},\ q\ge 3$ & $2q(2q-1)$ & $q$ \\ \hline
    $III_q,\ q\ge 2$ & $q(q+1)$ & $q$ \\ \hline
    $IV_q,\ q\ge 5$ & $2q$ & 2\\ \hline
    $VI$ & 54 & 3 \\ \hline
  \end{tabular}
  \caption{Tube-type domains}
  \label{tab:tube}
\end{table}

In particular, by direct examination we obtain

\begin{lemma}\label{le:tubedr}
  An irreducible tube-type bounded symmetric domain is uniquely determined by its rank and dimension.
  \qedhere
\end{lemma}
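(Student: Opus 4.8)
The plan is to observe that the assertion is equivalent to the injectivity of the assignment $D\mapsto(\operatorname{rank}D,\ \dim_{\bR}D)$ on the rows of \Cref{tab:tube}. Indeed, the parameter ranges in that table are chosen (following the conventions recalled at the start of \Cref{se:a}, which exclude the small-rank coincidences of \cite[Table 7]{viv}) precisely so that each irreducible tube-type domain occurs exactly once; two such domains with equal rank and dimension are therefore isomorphic if and only if they occupy the same row. First I would reduce the whole statement to a finite-plus-asymptotic check, stratified by the value $r$ of the rank.

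The key structural remark is that for each of the three infinite families $I_{q,q}$, $II_{2q}$, $III_q$ the rank equals the parameter $q$, so fixing $r$ singles out at most one candidate from each family: $I_{r,r}$ (dimension $2r^2$), $II_{2r}$ (dimension $2r(2r-1)=4r^2-2r$), and $III_r$ (dimension $r(r+1)=r^2+r$). The remaining entries are the single exceptional domain $VI$ (rank $3$, dimension $54$) and the family $IV_q$, which has constant rank $2$ and dimension $2q\ge 10$. Thus only finitely many ranks mix in the irregular contributions $IV_q$ and $VI$, and these I would dispatch by hand: rank $1$ leaves only $I_{1,1}$; rank $2$ gives $I_{2,2}$ (dimension $8$), $III_2$ (dimension $6$), and the family $IV_q$ with dimension $2q\ge 10$, whose dimension range is disjoint from $\{6,8\}$ and on which $\dim=2q$ determines $q$; rank $3$ gives the four distinct values $12,18,30,54$ for $III_3,I_{3,3},II_6,VI$ respectively.

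For all remaining ranks $r\ge 4$ only the three families $I,II,III$ contribute, and here the one genuine (if minor) computation is the strict chain $r^2+r<2r^2<4r^2-2r$. This follows from the elementary inequalities $2r^2-(r^2+r)=r(r-1)>0$ and $(4r^2-2r)-2r^2=2r(r-1)>0$, both valid already for $r\ge 2$; hence $\dim III_r<\dim I_{r,r}<\dim II_{2r}$ and the three dimensions are pairwise distinct at every fixed rank. Combining the finite checks with this asymptotic ordering yields injectivity and proves the lemma.

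I do not expect a serious obstacle, as the argument is bookkeeping dictated by \Cref{tab:tube}; the only points demanding a little care are (i) confirming that the chosen parameter ranges really make the table a bijection onto isomorphism classes, so that distinct rows correspond to non-isomorphic domains, and (ii) checking in the low-rank cases that the values $2q\ge 10$ coming from $IV_q$ do not accidentally collide with the small dimensions produced by $I$ and $III$.
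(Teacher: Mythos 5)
Your proposal is correct and is essentially the paper's own argument: the lemma is stated there as following ``by direct examination'' of \Cref{tab:tube}, and your stratification by rank, the low-rank checks for $IV_q$ and $VI$, and the chain $r^2+r<2r^2<4r^2-2r$ simply make that examination explicit (all the arithmetic checks out, and your caveat about parameter ranges is exactly what the paper's exclusion of the coincidences in \cite[Table 7]{viv} handles). Nothing further is needed.
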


On the other hand, for non-tube domains the rank and dimension are not quite enough. Supplementing this information with the dimension of the Shilov boundary $\check{S}(D)$, however, will produce a complete invariant, as seen by examining \Cref{tab:ntube}.

\begin{table}[h]
  \centering
  \begin{tabular}{|c|c|c|c|}
    \hline
    type & $\dim_{\bR}$ & rank & $\dim ~\text{Shilov boundary}$\\ \hline
    $I_{p,q},\ p>q\ge 1$ & $2pq$ & $q$ & $2pq-q^2$\\ \hline
    $II_{2q+1},\ q\ge 2$ & $2q(2q+1)$ & $q$ & $2q^2+3q$\\ \hline
    $V$ & 32 & 2 & 24\\ \hline    
  \end{tabular}
  \caption{Non-tube-type domains}
  \label{tab:ntube}
\end{table}

In short:

\begin{lemma}\label{le:ntubedrd}
  An irreducible non-tube-type bounded symmetric domain is uniquely determined by its rank, dimension, and Shilov-boundary dimension.
  \qedhere
\end{lemma}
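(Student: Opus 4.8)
The plan is to prove the statement by direct examination of \Cref{tab:ntube}, which by the Cartan classification exhausts the irreducible non-tube-type domains. It suffices to check that the map sending each entry to its triple (rank, dimension, Shilov-boundary dimension) is injective on this list. I would organize the verification in two stages: first recovering the parameters \emph{within} each of the three types $I_{p,q}$, $II_{2q+1}$, $V$, and then ruling out coincidences \emph{across} types.

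Within-type recovery is immediate. For type $I_{p,q}$ the rank returns $q$, after which the relation $\dim_\bR = 2pq$ (equivalently $\dim\check{S} = 2pq - q^2$) recovers $p$, the constraint $p>q$ being automatically consistent. For type $II_{2q+1}$ the rank already equals $q$ and there are no further parameters, so the rank alone pins down the domain, and type $V$ is a single domain. Hence no two domains of the same type share a triple.

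The substance of the argument, and the step where the Shilov-boundary dimension is genuinely needed, is the exclusion of cross-type coincidences. The delicate case is $I_{p,q}$ versus $II_{2q'+1}$: equal rank forces $q=q'$, and then equality of real dimensions $2pq = 2q(2q+1)$ forces $p = 2q+1$, so that rank and dimension alone do \emph{not} separate these two families (this is precisely the contrast with the tube-type \Cref{le:tubedr}, where those two invariants already sufficed). Here the third invariant decides: the Shilov-boundary dimension of $I_{2q+1,q}$ is $2(2q+1)q - q^2 = 3q^2 + 2q$, whereas that of $II_{2q+1}$ is $2q^2 + 3q$, and these agree only when $q^2 - q = q(q-1) = 0$, i.e. $q\in\{0,1\}$, outside the admissible range $q\ge 2$ imposed on $II_{2q+1}$. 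The remaining comparisons are quicker: matching $V$ (rank $2$, dimension $32$) against $I_{p,q}$ forces $q=2$ and $p=8$, whose Shilov-boundary dimension $28$ differs from $24$; and matching $V$ against $II_{2q+1}$ forces $q=2$, i.e. $II_5$, which already fails at the level of dimension ($20\ne 32$). This exhausts the pairs and establishes injectivity of the triple, which is the claim. I expect the $I$-versus-$II$ comparison to be the only real obstacle, since it is the one place where two genuinely distinct families collide in both rank and dimension.
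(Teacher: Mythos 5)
Your proposal is correct and follows the same route as the paper, which proves the lemma simply by inspection of \Cref{tab:ntube}; you have merely made explicit the case analysis the paper leaves implicit, and your computations (in particular the decisive comparison of $I_{2q+1,q}$ against $II_{2q+1}$, where rank and dimension coincide but the Shilov-boundary dimensions $3q^2+2q$ and $2q^2+3q$ differ for $q\ge 2$) all check out.
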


\begin{remark}
  \Cref{tab:tube} implicitly records the Shilov-boundary dimension too: by \cite[Theorem 4.9]{kw}, for tube domains $\dim\check{S}(D)$ is precisely half of $\dim_{\bR}D$.
\end{remark}

As a consequence of all of the above, we have

\begin{corollary}\label{cor:complinv}
  An irreducible bounded symmetric domain is determined by its rank, dimension, and (homeomorphism class of the) Shilov boundary. 
\end{corollary}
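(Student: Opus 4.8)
The plan is to reduce the corollary to a purely numerical statement about the three invariants and then to assemble \Cref{le:tubedr,le:ntubedrd} into a single complete-invariant claim. First I would observe that the homeomorphism class of $\check{S}(D)$ determines its covering dimension $\dim\check{S}(D)$, so it is enough to prove that the triple
\begin{equation*}
  \bigl(\mathrm{rank}\,D,\ \dim_{\bR}D,\ \dim\check{S}(D)\bigr)
\end{equation*}
determines $D$ up to isomorphism among all irreducible bounded symmetric domains. The two lemmas already deliver exactly this \emph{within} each of the tube-type and non-tube-type families separately (indeed \Cref{le:tubedr} needs only the first two entries), so the sole remaining content is that the two families cannot be confused: no tube-type domain may share such a triple with a non-tube-type one.

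The key step is therefore to separate the two families using only the numerical data. Here I would invoke the relation recorded in the remark following \Cref{tab:ntube}: for a tube-type domain one has the exact equality $\dim\check{S}(D)=\tfrac12\dim_{\bR}D$ (from \cite[Theorem 4.9]{kw}). Against this I would check, by running through the three families in \Cref{tab:ntube}, that every non-tube-type domain satisfies the \emph{strict} inequality $\dim\check{S}(D)>\tfrac12\dim_{\bR}D$: for $I_{p,q}$ with $p>q$ the excess is $q(p-q)>0$, for $II_{2q+1}$ it is $2q>0$, and for $V$ it is $8>0$. Consequently the single relation $\dim\check{S}(D)=\tfrac12\dim_{\bR}D$ detects, from the invariants alone, whether $D$ is of tube type.

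With this dichotomy in hand the corollary follows immediately: given the triple, I would first test the tube-type equality to decide which family $D$ lies in, and then apply \Cref{le:tubedr} in the tube case and \Cref{le:ntubedrd} in the non-tube case; two domains with equal triples necessarily land in the same family and are hence isomorphic. The only nontrivial—though still entirely routine—step is the strict-inequality verification for the non-tube families, and it requires no analytic input beyond the already-cited dimension formula for tube domains; everything else is bookkeeping.
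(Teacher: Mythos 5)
Your proof is correct, but it separates the tube and non-tube families by a genuinely different mechanism than the paper. The paper's proof is one line: it combines \Cref{le:tubedr,le:ntubedrd} with the Kor\'anyi--Wolf characterization of tube type by the first Betti number (an irreducible domain is of tube type if and only if $b_1=1$; \cite[Theorem 4.11]{kw}), so there it is the homeomorphism class of $\check{S}(D)$ itself that decides which family $D$ belongs to, after which the two lemmas finish exactly as in your argument. You instead decide the family numerically: the equality $\dim\check{S}(D)=\tfrac12\dim_{\bR}D$ for tube domains (\cite[Theorem 4.9]{kw}, as recorded in the remark following \Cref{tab:ntube}) against a strict inequality for every non-tube domain, which your case check from \Cref{tab:ntube} verifies correctly (the excesses $q(p-q)>0$ for $I_{p,q}$ with $p>q$, $2q>0$ for $II_{2q+1}$, and $8>0$ for type $V$ are all right). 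Your route in fact buys a slightly stronger conclusion than the corollary as stated: the bare numerical triple $\bigl(\mathrm{rank}\,D,\ \dim_{\bR}D,\ \dim\check{S}(D)\bigr)$ is already a complete invariant, the homeomorphism class of the Shilov boundary being needed only to read off its covering dimension. The trade-off is that your tube/non-tube dichotomy is a posteriori, resting on a sweep through the classification tables, whereas the Betti-number criterion is a structural, classification-free statement; but since \Cref{le:tubedr,le:ntubedrd} are themselves table checks, the proof as a whole is classification-dependent on either route, and nothing substantive is lost.
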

\begin{proof}
  This follows from \Cref{le:tubedr,le:ntubedrd}, once we recall that an irreducible bounded symmetric domain is of tube type if and only if its first Betti number is 1 (e.g. \cite[Theorem 4.11]{kw}).
\end{proof}


\bibliographystyle{plain}
\addcontentsline{toc}{section}{References}

\Addresses

\end{document}